\theoremstyle{plain}
\newtheorem{theorem}{Theorem}
\newtheorem{proposition}[theorem]{Proposition}
\newtheorem{lemma}[theorem]{Lemma}
\newtheorem{corollary}[theorem]{Corollary}
\theoremstyle{definition}
\newtheorem{remark}[theorem]{Remark}}
\theoremstyle{definition}
\newtheorem{remarks}[theorem]{Remarks}}
\newcommand{\R}{{\mathbb {R}}}
\newcommand{\ul}{\underline}
\newcommand{\ol}{\overline}
\begin{document}

\begin{center}
{\LARGE\bf
Approximating Mills ratio} \\ [0.5 cm]

{\sc\Large Armengol Gasull and Frederic Utzet} \\ [0.5 cm]
\end{center}

\noindent{\sl\large Departament de Matem\`{a}tiques, Edifici C,  Universitat Aut\`{o}noma de Barcelona,
08193 Bellaterra (Barcelona) Spain.  E-mails:  gasull@mat.uab.cat,
 utzet@mat.uab.cat}


\begin{center}
\begin{minipage}{0.9\linewidth}
\begin{small}
\noindent{\bf Abstract.}

\smallskip

Consider the Mills ratio  $f(x)=\big(1-\Phi(x)\big)/\phi(x), \, x\ge 0$,  where $\phi$ is the
density  function of the standard Gaussian law and $\Phi$ its cumulative distribution.
We introduce a general procedure to approximate $f$ on the
whole $[0,\infty)$  which
   allows to prove  interesting  properties where
  $f$ is involved.
As  applications  we  present a new proof
that $1/f$ is strictly convex, and we give new sharp  bounds of $f$
involving rational functions, functions with  square roots or exponential terms.
Also Chernoff type bounds for the Gaussian $Q$--function are studied.

\smallskip

{\bf Keywords:}
 Gaussian law, Mills ratio, error function,  Gaussian {\it Q}--function
\smallskip

\end{small}
\end{minipage}
\end{center}
\section{Introduction}

Recall that the Mills ratio
 (Mills \cite{Mills}) is the function
\begin{equation}\label{1} f(x)=\frac{1-\Phi(x)}{\phi(x)}={e}^{\frac{x^2}2}\int_x^\infty
{e}^{-\frac{t^2}2}\,dt, \ x\ge 0,
\end{equation}
 where
$\phi(x)=e^{-x^2/2}/\sqrt{2\pi}$ is the density function of a
standard Gaussian  law and $\Phi(x)=\int_{-\infty}^x\phi(t)\, dt$
its cumulative distribution function. The study of this function is much older than
Mills \cite{Mills}, and through   its relation with the function
\begin{equation}
\label{functionLaplace}
F(x)=e^{x^2}\int_x^\infty e^{-t^2}\, dt
\end{equation}
given by
$$f(x)=\sqrt 2\, F(x/\sqrt 2),$$
its introduction
can be traced back to Laplace (1805) \cite[Livre X, Chap.
1, n$^{\rm o}$ 5]{laplace}, while he was analyzing different
hypotheses related with the refraction of the light in the
atmosphere. As we will  comment later,  Laplace gave  many of the essential results, like the
continued fraction and   the asymptotic expansion. Moreover,
 since  the function $F$ is related with the error function,
 and also with the upper incomplete Gamma function  of parameter $1/2$,
properties of Mills ratio are spread  between papers and books
of Probability and Statistics, Mathematical Analysis, Numerical Analysis, etc,
and many results have been discovered and rediscovered by different authors.

In the first part of this paper we collect, in a short,  unified and self-contained
way, some known results about the approximation
of $f$ by rational funcions. In particular, we prove the surprising fact that the convergents of the continued fraction
of $f$ give, at the same time, the expression of its $n^{\rm th}$-derivative.

In a second part, joining the rational  bounds  for $f(x)$ when
$x$ is large, with a Taylor formula for small $x$, we construct
effective bounds for  $f$ on the whole $[0,\infty)$.
From that, we introduce  a general procedure to prove properties
where $f$ is involved; this procedure consists
on the reduction to the problem at hand  to the control of the real roots of
a polynomial with rational coefficients, which  can be done using rigorous  analytic methods
based on the Sturm Theorem (see the Appendix for more details on that theorem).

As a first application we
prove that the reciprocal function of  Mills ratio  $1/f$ is strictly convex.
This result  was implicitly conjectured by Birnbaum \cite{Bir50} in 1950
(see Subsection \ref{Sub:Main})
and  demonstrated few years later  independently by  Sampford \cite{Sam} and
 Shenton \cite{se}.
We stress that, in contrast with  known   proofs,
we apply a general procedure which, as we will see, can be useful in many other problems.

As a second
application we study  lower and upper bounds for $f$
of the form
$$\psi_{a,b,c}(x)=\frac{a}{\sqrt{x^2+b}+cx}.$$
These functions have the same type of  asymptotic expansion as $f$ when $x\to \infty$,
and then very sharp bounds for $f$ can be obtained.
These bounds are important because they give  very good estimations of the Gaussian
 cumulative
probability distribution. We review in  a  systematic way known bounds of this
type, and construct other new.

 Finally, we shortly present new families of simple bounds of $f$ of the form $a/(b+x)$,
 $(a+bx)/(c+dx+x^2)$ and $\big(1-\text{exp}(-ax)\big)/bx$. We also comment on  Chernoff type bounds
 $a\, \text{exp}(-bx^2)$ for the function $\int_x^\infty \text{exp}(-t^2/2)\, dt/\sqrt{2\pi}$, called
 the Gaussian $Q$--function in engineering literature.  All  these
  properties can  be proved  by using our methodology.

\section{Approximation of Mills ratio by rational functions}

\subsection{Notations and main results}

Starting from $f'(x)=xf(x)-1$, it follows that the $n^{\rm th}$-derivative of $f$,
$n\geq1,$ satisfies
\begin{equation}\label{rec-der2}
 f^{(n)}(x)=P_n(x) f(x)-Q_n(x),
\end{equation} for some polynomials  $P_n$ and $Q_n$, with non negative integer coefficients and respective  degrees
$n$ and $n-1$. We prove:

\begin{theorem}\label{main} Let $P_n$ and $Q_n$ be the polynomials defined by~\eqref{rec-der2}
where $f$ is the Mills ratio~\eqref{1}. Then, for all $n\ge1$ and
$x\ge0,$
\begin{equation}\label{fita}0<(-1)^n
\frac{f^{(n)}(x)}{P_n(x)}=(-1)^n\left(f(x)-\frac{Q_{n}(x)}{P_{n}(x)}\right)<
\frac{n!}{x^{2n+1}}.\end{equation} Moreover, for all $x>0$,
 when ${m\to \infty}$,
$$\frac{Q_{2m}(x)}{P_{2m}(x)}\nearrow f(x) \quad
\text{and}\quad \frac{Q_{2m+1}(x)}{P_{2m+1}(x)}\searrow f(x).$$
\end{theorem}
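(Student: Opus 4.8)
The plan is to derive everything from a single integral representation. Substituting $t=x+s$ in~\eqref{1} gives $f(x)=\int_0^\infty e^{-xs-s^2/2}\,ds$ for $x\ge0$. Because the factor $e^{-s^2/2}$ makes the integrand and each of its $x$-derivatives dominated, uniformly for $x$ in compact subsets of $[0,\infty)$, by an $s$-integrable function, I may differentiate under the integral sign to obtain
\[
f^{(n)}(x)=(-1)^n\int_0^\infty s^n e^{-xs-s^2/2}\,ds,\qquad x\ge0.
\]
This makes the sign transparent: the integral is strictly positive, so $(-1)^n f^{(n)}(x)>0$. Dividing by $P_n(x)$, which is positive for $x>0$ since it has non-negative coefficients and is monic of degree $n$ (the recurrence $P_{n+1}=P_n'+xP_n$ preserves the leading coefficient), yields the left inequality in~\eqref{fita}; the central equality is simply the defining relation $f^{(n)}=P_nf-Q_n$ rewritten. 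For the right inequality I bound $e^{-s^2/2}<1$ on $(0,\infty)$, so $(-1)^n f^{(n)}(x)<\int_0^\infty s^n e^{-xs}\,ds=n!/x^{n+1}$; combined with $P_n(x)\ge x^n$ (monic with non-negative coefficients), this gives $(-1)^n f^{(n)}(x)/P_n(x)<n!/x^{2n+1}$, completing~\eqref{fita}.

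For the monotone convergence I would first identify $Q_n/P_n$ as the convergents of Laplace's continued fraction
\[
f(x)=\cfrac{1}{x+\cfrac{1}{x+\cfrac{2}{x+\cfrac{3}{x+\ddots}}}}.
\]
Differentiating $f'=xf-1$ with the Leibniz rule gives, for $n\ge1$, the clean three-term relation $f^{(n+1)}=xf^{(n)}+nf^{(n-1)}$. Substituting $f^{(k)}=P_kf-Q_k$ turns this into $(P_{n+1}-xP_n-nP_{n-1})f=Q_{n+1}-xQ_n-nQ_{n-1}$. Since $f$ is not a rational function, the representation $f^{(k)}=P_kf-Q_k$ is unique, so both sides must vanish identically, giving the recurrences $P_{n+1}=xP_n+nP_{n-1}$ and $Q_{n+1}=xQ_n+nQ_{n-1}$ with $P_0=1,\,P_1=x,\,Q_0=0,\,Q_1=1$; these are exactly the convergent recurrences of the displayed continued fraction.

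From here I would prove the Casoratian identity $Q_{n+1}P_n-Q_nP_{n+1}=(-1)^n n!$ by induction: the step reduces, via the two recurrences, to $Q_{n+1}P_n-Q_nP_{n+1}=-n\,(Q_nP_{n-1}-Q_{n-1}P_n)$, and the base case $n=0$ gives $Q_1P_0-Q_0P_1=1$. Applying the recurrences once more yields the two-step difference
\[
\frac{Q_{n+2}}{P_{n+2}}-\frac{Q_n}{P_n}=\frac{x\,(Q_{n+1}P_n-Q_nP_{n+1})}{P_nP_{n+2}}=\frac{(-1)^n\,x\,n!}{P_nP_{n+2}},
\]
whose sign is $(-1)^n$ for $x>0$. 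Hence $m\mapsto Q_{2m}/P_{2m}$ is strictly increasing and $m\mapsto Q_{2m+1}/P_{2m+1}$ strictly decreasing. Since~\eqref{fita} already bounds the error $|f-Q_n/P_n|$ by $n!/(P_n(x)x^{n+1})\to0$ for fixed $x>0$, the even convergents increase to $f$ and the odd ones decrease to $f$, which is the final assertion.

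The genuinely delicate points are the uniqueness (non-rationality of $f$) that legitimizes reading off the recurrences coefficient-by-coefficient, and getting the constants and signs in the Casoratian and the two-step difference exactly right; the rest is routine once the integral representation is in hand. I expect the main obstacle to be organizing the induction for the Casoratian so that it meshes cleanly with the error bound from the first part, since it is the interplay between the sign $(-1)^n$ of that difference and the vanishing of the error that produces the interlaced monotone convergence.
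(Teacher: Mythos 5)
Your proof of the double inequality \eqref{fita} is correct and takes a genuinely different route from the paper's: you differentiate the representation $f(x)=\int_0^\infty e^{-xs-s^2/2}\,ds$ under the integral sign (this is \eqref{lap}, which the paper records only in connection with Baricz's proof of complete monotonicity), whereas the paper obtains the sign of $f^{(n)}/P_n$ by Feller's device of comparing negatives of derivatives and integrating from $x$ to $\infty$, via the identity \eqref{pquadrat}. Your route gives the left inequality immediately and the right one from $e^{-s^2/2}<1$ together with $P_n(x)\ge x^n$; it is arguably cleaner than the paper's, which instead deduces the right inequality from the consecutive-convergent identity \eqref{diff}. Your derivation of the three-term recurrences via non-rationality of $f$, the Casoratian identity (the paper's \eqref{prodPQ}), and the two-step difference giving the interlaced monotonicity all match Lemma~\ref{l3} and are correct, modulo the fact that you assert rather than prove that $f$ is not rational (the paper supplies a one-line pole-order argument on $f'=xf-1$).

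The genuine gap is the final convergence step. You claim that the error bound $n!/\bigl(P_n(x)\,x^{n+1}\bigr)$ tends to $0$ for fixed $x>0$; it does not. By \eqref{PnExplicit}, each term of $P_n(x)/n!$ satisfies $\frac{x^{n-2k}}{2^kk!(n-2k)!}=\frac{1}{r^{n}}\frac{(xr)^{n-2k}}{(n-2k)!}\frac{(r^2/2)^k}{k!}\le \frac{e^{xr+r^2/2}}{r^n}$ after summing over $k$, for every $r>0$; taking $r=\sqrt n$ gives $P_n(x)\le n!\,e^{x\sqrt n}(e/n)^{n/2}$, hence
\begin{equation*}
\frac{n!}{P_n(x)\,x^{n+1}}\ \ge\ \frac{1}{x}\Bigl(\frac{n}{e\,x^{2}}\Bigr)^{n/2}e^{-x\sqrt n}\ \longrightarrow\ \infty .
\end{equation*}
The underlying reason is that a single denominator $P_n(x)$ is only of size roughly $(n-1)!!\approx\sqrt{n!}$ for fixed $x$, which is why the paper works instead with the gap between \emph{consecutive} convergents, $\frac{Q_{n+1}}{P_{n+1}}-\frac{Q_n}{P_n}=\frac{(-1)^n n!}{P_n(x)P_{n+1}(x)}$, and bounds the product of two consecutive denominators from below by isolating the constant term of $P_n$ and the linear term of $P_{n+1}$: $P_{n+1}(x)P_n(x)>x\,(n+1)!!\,(n-1)!!$, so the gap is at most $n!!/\bigl(x\,(n+1)!!\bigr)\to0$ by Stirling. (A sanity check that some such care is needed: at $x=0$ the even convergents equal $0$ while $f(0)=\sqrt{\pi/2}$, so convergence fails there and is non-uniform near the origin.) Everything in your argument up to and including the monotone boundedness of the two subsequences stands; only the last limit claim needs to be replaced by an estimate of this type.
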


Our proof is similar to  the approach
presented in the unpublished (as far as we know) work  of Kouba~(\cite{K})
 that we discovered after preparing  a first
version of this paper.

 As a
corollary of Theorem~\ref{main} we obtain:

\begin{corollary}\label{coromain}

\rule[0mm]{0mm}{0mm}

\begin{enumerate}[(i)]

\item The Mills ratio $f$ is completely monotone.

\item The rational functions $Q_n/P_n$  are the  convergents of the continued
 fraction expansion of $f.$

\item The rational functions $Q_n/P_n$ are the Pad\'{e}--Laurent
approximants of $f$ at infinity.

\item The polynomial $P_n(x)$
  coincides with  $H\!e_n(ix)/i^n$, where $i^2=-1$, and $H\!e_n(x)$ is
  the monic Hermite polynomial of order $n$  with respect to the weight function $e^{-x^2/2}$.

\end{enumerate}

\end{corollary}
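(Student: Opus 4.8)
The plan is to deduce all four statements from the two differential recurrences satisfied by $P_n,Q_n$ together with Theorem~\ref{main}. Differentiating~\eqref{rec-der2} and using $f'(x)=xf(x)-1$ gives $P_{n+1}=xP_n+P_n'$ and $Q_{n+1}=P_n+Q_n'$, with $P_0=1,\ Q_0=0$. An easy induction on $P_{n+1}=xP_n+P_n'$ shows $P_n$ is monic, and the hypothesis of~\eqref{rec-der2} gives nonnegative coefficients, so $P_n(x)>0$ for $x>0$. Statement~(i) is then essentially free: the strict inequality $0<(-1)^nf^{(n)}(x)/P_n(x)$ in~\eqref{fita} forces $(-1)^nf^{(n)}(x)>0$ for every $n\ge1$ and $x>0$, and together with $f>0$ this is exactly the definition of complete monotonicity.

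For~(iv) I would argue by induction after setting $R_n(x)=i^{-n}H\!e_n(ix)$ and invoking the classical identities $H\!e_{n+1}(y)=yH\!e_n(y)-H\!e_n'(y)$ and $H\!e_n'(y)=nH\!e_{n-1}(y)$. A short computation, keeping careful track of the powers of $i$ (using $\frac{d}{dx}H\!e_n(ix)=iH\!e_n'(ix)$ and $i^{-2}=-1$, which is what converts the $-H\!e_n'$ term into $+R_n'$), shows that $R_{n+1}=xR_n+R_n'$, i.e. $R_n$ obeys the same recurrence as $P_n$; since $R_0=1=P_0$ we conclude $P_n=R_n$. The same Hermite identity then yields the decisive relation $P_n'(x)=nP_{n-1}(x)$.

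Part~(ii) rests on upgrading the two differential recurrences to genuine three--term recurrences. From $P_n'=nP_{n-1}$ one gets at once $P_{n+1}=xP_n+nP_{n-1}$. For $Q_n$ I would prove the identical recurrence $Q_{n+1}=xQ_n+nQ_{n-1}$ by induction: differentiating the assumed relation for $Q_n$, eliminating the derivatives through $Q_k'=Q_{k+1}-P_k$, and collapsing with the three--term recurrence for $P$ produces precisely $Q_n'=xQ_n+nQ_{n-1}-P_n$, whence $Q_{n+1}=P_n+Q_n'=xQ_n+nQ_{n-1}$. These are exactly the recurrences obeyed by the numerators and denominators of Laplace's continued fraction
\[
f(x)=\cfrac{1}{x+\cfrac{1}{x+\cfrac{2}{x+\cfrac{3}{x+\cdots}}}},
\]
and a check of the initial data confirms that the pairs $(Q_n,P_n)$ are its numerator/denominator sequences; hence $Q_n/P_n$ are its convergents, the convergence $Q_n/P_n\to f$ being supplied by Theorem~\ref{main}.

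Finally, for~(iii) I would combine degree bookkeeping with the error estimate of Theorem~\ref{main}. Since $\deg Q_n=n-1$ and $\deg P_n=n$, the function $Q_n/P_n$ is of type $[n-1/n]$ relative to the asymptotic expansion $f(x)\sim\sum_{k\ge0}(-1)^k(2k-1)!!\,x^{-(2k+1)}$ at infinity, and the defining property of the corresponding Pad\'e--Laurent approximant is the matching of the first $2n$ Laurent coefficients, equivalently $f-Q_n/P_n=O(x^{-(2n+1)})$. The bound $0<(-1)^n\big(f-Q_n/P_n\big)<n!/x^{2n+1}$ from~\eqref{fita} delivers exactly this order condition, so by uniqueness of Pad\'e approximants $Q_n/P_n$ is the $[n-1/n]$ approximant. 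I expect the \textbf{two main obstacles} to be the careful bookkeeping of the powers of $i$ in~(iv) and, in~(iii), the precise matching of the order $O(x^{-(2n+1)})$ furnished by the theorem with the definition of the Pad\'e--Laurent approximant together with the invocation of its uniqueness; the three--term recurrence for $Q_n$ in~(ii), though elementary, is the computational heart of the argument.
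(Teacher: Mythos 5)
Your proof is correct, but it inverts the paper's logical dependencies in a genuinely different way. The paper first establishes the three-term recurrences $P_n=xP_{n-1}+(n-1)P_{n-2}$ and $Q_n=xQ_{n-1}+(n-1)Q_{n-2}$ in Lemma~\ref{l3}(a) by noting that any failure of them in \eqref{igualtatpol} would force $f$ to be a rational solution of $y'=xy-1$, which is impossible (the same non-rationality fact behind Liouville's theorem on $\int e^{-x^2/2}\,dx$); items (ii) and (iv) are then read off by matching these recurrences against, respectively, the numerator/denominator recurrences of the continued fraction and the Hermite recurrence $H\!e_n(x)=xH\!e_{n-1}(x)-(n-1)H\!e_{n-2}(x)$. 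You instead prove (iv) first, directly from the differential recurrence $P_{n+1}=xP_n+P_n'$ and the classical identities $H\!e_{n+1}(y)=yH\!e_n(y)-H\!e_n'(y)$, $H\!e_n'=nH\!e_{n-1}$, extract $P_n'=nP_{n-1}$ from the Hermite side, and only then obtain the three-term recurrences (that for $Q_n$ by a clean self-contained induction), from which (ii) follows as in the paper. This buys a derivation of the recurrences that bypasses the non-rationality argument as such --- though be aware that you still need $f$ to be non-rational (or an explicit definition of $P_{n+1},Q_{n+1}$) to read off $P_{n+1}=xP_n+P_n'$ and $Q_{n+1}=P_n+Q_n'$ \emph{uniquely} from the differentiated form of \eqref{rec-der2}; the paper has exactly the same implicit step at \eqref{derPNmes1}. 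Items (i) and (iii) match the paper's argument essentially verbatim: positivity of $P_n$ plus \eqref{fita} for (i), and for (iii) the order condition $f-Q_n/P_n=O(x^{-(2n+1)})$ supplied by \eqref{fita} measured against the $2n$ coefficients a $[n-1/n]$ approximant must match; the paper carries this out after the substitution $y=1/x$ and with attention to the parity of $P_n,Q_n$, but the coefficient count is the same.
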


Item {\it (i)} is already known, see the comments below. The convergents
and the Pad\'{e}--Laurent approximants of $f$ are also known, but they
are obtained using a different method. We obtain them  simply using
equality \eqref{rec-der2}. Let us comment with more detail all the
items of the corollary.

Recall that a function $h:(0,\infty)\to \R$ is called {\it completely
monotone} if it is of class ${\cal C}^\infty$ and for all $n\ge 0$,
$$(-1)^nh^{(n)}(x)\ge 0,\  x>0,$$
where $h^{(0)}=h$.
 Completely monotone functions  and the related Bernstein functions
constitute   a topic of permanent interest due to its apparition in
several different areas of Mathematics and its many applications;
for a complete treatise  see Schilling {\it et al.}
\cite{schilling12}.

The result stated in item {\it (i)} was  proved by
Baricz~\cite{bar}. Its proof is based on the following nice equality
\begin{equation}\label{lap}
f(x)={e}^{\frac{x^2}2}\int_x^\infty
{e}^{-\frac{t^2}2}\,dt=\int_0^\infty {e}^{-{x
t}}{e}^{-\frac{t^2}2}\,dt,
\end{equation}
that already appears in Ray and Pitman \cite[Formula (9)]{RayPit63}. From this equality the sign of the $n^{\rm th}$ derivative is simply
obtained using the right-hand side expression of~$f$. Recall that
Bernstein Theorem (see Schilling {\it et al.} \cite[Theorem
1.4]{schilling12}) characterizes completely monotone functions as
the ones that are the Laplace transform of a (univocally
determinate) measure on $[0,\infty)$. Equality \eqref{lap} gives
explicitly this measure.

In item {\it (ii)} we prove that
 $Q_n/P_n$ are the convergents of the continued fraction expansion of $f$:
\begin{equation}
\label{fracciocont} f(x)=\cfrac{1}{x+\cfrac{1}{x+
\cfrac{2}{x+\cfrac{3}{x+ \cdots}}}}\,.
\end{equation}
The expansion corresponding to the function $F$ given in (\ref{functionLaplace})
 was obtained by Laplace  \cite[Livre X, Chap.
1, n$^{\rm o}$ 5]{laplace}
and  the continued
fraction (\ref{fracciocont}) can be easily deduced from  Laplace's
one. For a direct proof of (\ref{fracciocont}) see Small
\cite[Section 3.5]{small10}.

One of the main steps in the proof of Theorem \ref{main} and of item
{\it (ii)} is to show that $\{P_n(x)\}_n$ and $\{Q_n(x)\}_n$ satisfy some
second order recurrences, see Lemma~\ref{l3}. We obtain these
recurrences by using that the differential equation $y'(x)-xy(x)=1$ has
no rational solutions. It is interesting to comment that this is one
of the key points in the celebrated Liouville's proof that the
distribution function of the Gaussian law has no primitive which can
be expressed in terms of elementary functions, see Remark \ref{Liouville}.

In item {\it (iii)} we show that   the rational functions $Q_n/P_n$ are
some Pad\'{e}--Laurent approximants of $f$ at infinity. Similar results
 appear in \cite{p}. In fact the asymptotic expansion of $f$ at
infinity is (see Small \cite[p. 44]{small10})
\begin{equation}
\label{asy_feller}
f(x)\ \sim\   \frac{1}{x}-\frac{1}{x^3}+\frac{1\cdot 3}{x^5}-
\frac{1\cdot 3\cdot 5}{x^7}+\cdots,  \ x\to\infty.
\end{equation}
This expansion also comes  from Laplace  \cite[Livre X, Chap. 1, n$^{\rm o}$ 5]{laplace}
(with the same comments as above with respect to the function involved).
The bounds for $f$ deduced from (\ref{asy_feller}) are widely used in Probability and
Statistics; for example, Hall \cite{Hall79} uses  the first two terms as a key ingredient
 to get the rate of convergence in the supremum metric of the maxima of standard normal random variables
to Gumbel law. Denote by  $J_n(x)$ the finite expansion of
$f(x)$ up to $1/x^{2n-1}$ deduced from (\ref{asy_feller}):
$J_1(x)={1}/{x},$ and
\begin{equation}\label{jj}
J_n(x)= \frac{1}{x}-\frac{1}{x^3}+\cdots +(-1)^{n+1} \frac{
(2n-3)!!}{x^{2n-1}},\ \ \text{for $n\ge 2$},\end{equation}
It is well known (see Small \cite[Section 2.3]{small10}) that
the error term of the finite expansion
 is bounded by the first neglected term, that is,
\begin{equation}\label{ineq}0<(-1)^n\left(f(x)-J_n(x)\right)< \frac{
(2n-1)!!}{x^{2n+1}}.\end{equation}
Since $n!<(2n-1)!!$,  the functions $Q_n(x)/P_n(x)$ seem to approach
faster $f$  than the functions $J_n(x)$ (see Theorem \ref{main}). This turns out to be true,
see Lemma~\ref{ppp}. A possible explanation of this fact comes from
item {\it (iii)} of Corollary~\ref{coromain}. As often happens   the Pad\'{e}
approximants give better approximation of the function than
truncating  the corresponding  Taylor series.

\bigskip

We thank  Iain Johnstone for pointing out the property given in item
{\it (iv)}. It was missed in a first version of this work. From the explicit expression
of the Hermite polynomials NIST \cite[Formulas 18.5.13 and 18.7.12]{NIST10}
it is deduced that
\begin{equation}
\label{PnExplicit}
P_n(x)=n!\sum_{k=0}^{[n/2]}\frac{x^{n-2k}}{2^k k!(n-2k)!}.
\end{equation}
Property {\it (iv)} and expression (\ref{PnExplicit})
appear in  Kouba \cite{K}, where also an explicit expression of $Q_n$ is given.

\subsection{Proof of Theorem \ref{main} and Corollary~\ref{coromain}}

We first compile several  properties of the
polynomials introduced in~\eqref{rec-der2}, see also Kouba~\cite{K}.

\begin{lemma}\label{l3} Let $P_n$ and $Q_n$ be the polynomials defined
by~\eqref{rec-der2}.
\begin{enumerate}[(a)]
\item It holds that \begin{equation}
\label{recP} P_n(x)=x P_{n-1}(x)+(n-1)P_{n-2}(x),\  n\ge 2,
\end{equation}
with initial conditions $P_0(x)=1$ and $P_1(x)=x,$ and
\begin{equation}
\label{recQ} Q_n(x)=x Q_{n-1}(x)+(n-1)Q_{n-2}(x),\  n\ge 2,
\end{equation}
with initial conditions $Q_0(x)=0$ and $Q_1(x)=1.$ In particular,
both polynomials  $P_n$ and $Q_n$ are monic and  their coefficients
are nonnegative integers.

\item For $n\ge 1$,
\begin{equation}
\label{derivada} P_{n}'(x)=nP_{n-1}(x) \qquad\mbox{and}\qquad
Q'_n(x)=xQ_n(x)+nQ_{n-1}(x)-P_n(x).
\end{equation}

\item For $n\ge1,$
\begin{equation}
\label{prodPQ} Q_n(x) P_{n-1}(x)-Q_{n-1}(x)P_n(x) =(-1)^{n+1}(n-1)!.
\end{equation}
\end{enumerate}
\end{lemma}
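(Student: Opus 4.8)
The plan is to derive every statement from the defining identity \eqref{rec-der2}, $f^{(n)}=P_nf-Q_n$, together with the first order relation $f'=xf-1$, using only differentiation and induction. The starting observation is that differentiating \eqref{rec-der2} gives
\[
f^{(n+1)}=P_n'f+P_nf'-Q_n'=(xP_n+P_n')f-(P_n+Q_n').
\]
Since $f$ is not a rational function (equivalently, $y'-xy=1$ has no rational solution), an identity $A(x)f(x)=B(x)$ with $A,B$ polynomials forces $A\equiv B\equiv 0$; comparing the two expressions for $f^{(n+1)}$ therefore yields the first order recurrences
\begin{equation}\label{eq:star}
P_{n+1}=xP_n+P_n',\qquad Q_{n+1}=P_n+Q_n'.
\end{equation}
Everything in the lemma will come out of \eqref{eq:star} and the initial data $P_0=1,\ P_1=x,\ Q_0=0,\ Q_1=1$.

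First I would establish part (b), carrying part (a) along simultaneously. For $P$ I argue by strong induction: assuming $P_k'=kP_{k-1}$ for $k\le n$, the first relation of \eqref{eq:star} at index $n-1$ gives $P_n=xP_{n-1}+(n-1)P_{n-2}$, which is \eqref{recP}; differentiating the first relation of \eqref{eq:star} and substituting $P_n'=nP_{n-1}$ and $P_n''=n(n-1)P_{n-2}$ then gives $P_{n+1}'=P_n+n\big(xP_{n-1}+(n-1)P_{n-2}\big)=(n+1)P_n$, closing the induction. The important structural point is that the three term recurrence \eqref{recP} is not an extra hypothesis but falls out of \eqref{eq:star} the instant the derivative formula at the previous index is available. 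The same pattern handles $Q$: once $Q_n'=xQ_n+nQ_{n-1}-P_n$ is known, the second relation of \eqref{eq:star} gives $Q_{n+1}=P_n+Q_n'=xQ_n+nQ_{n-1}$, which is \eqref{recQ} (the two copies of $P_n$ cancel), and differentiating \eqref{recQ} and simplifying with the inductive derivative formulas returns $Q'_{n+1}=xQ_{n+1}+(n+1)Q_n-P_{n+1}$. With \eqref{recP} and \eqref{recQ} proved, the monic property and the nonnegativity and integrality of the coefficients follow by an immediate induction, because each recurrence has nonnegative integer coefficients and raises the degree by exactly one.

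Finally, part (c) is a Casoratian computation. Setting $W_n=Q_nP_{n-1}-Q_{n-1}P_n$ and substituting the second order recurrences \eqref{recP} and \eqref{recQ} for $P_{n+1}$ and $Q_{n+1}$, the symmetric terms $xQ_nP_n$ cancel and one is left with $W_{n+1}=-nW_n$. Since $W_1=Q_1P_0-Q_0P_1=1$, this yields $W_n=(-1)^{n-1}(n-1)!=(-1)^{n+1}(n-1)!$, which is \eqref{prodPQ}.

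I expect the only delicate part to be the bookkeeping in the induction for part (b): the derivative formulas and the three term recurrences are mutually entangled, so they must be proved in one simultaneous strong induction, and the base cases ($n=1,2$) must be checked by hand so that the indices $P_{n-2}$ and $Q_{n-2}$ appearing in the inductive step are always meaningful. Once \eqref{eq:star} is in place, all the remaining manipulations are routine polynomial algebra.
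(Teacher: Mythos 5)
Your proof is correct, but it organizes the argument differently from the paper. The paper first obtains the three-term recurrences \eqref{recP} and \eqref{recQ} directly from Leibnitz's formula: repeated differentiation of $f'=xf-1$ gives $f^{(n)}=xf^{(n-1)}+(n-1)f^{(n-2)}$, and substituting \eqref{rec-der2} together with the non-rationality of $f$ transfers that recurrence to $P_n$ and $Q_n$; only afterwards does it differentiate \eqref{rec-der2} to obtain $P_{n+1}=P_n'+xP_n$ and $Q_{n+1}=P_n+Q_n'$, and part (b) falls out by combining these with part (a). You invert this order: you take the first-order relations as the sole starting point (same differentiation, same non-rationality argument to identify coefficients of $f$) and recover \eqref{recP}, \eqref{recQ} and \eqref{derivada} together by one simultaneous strong induction, never invoking the Leibnitz recurrence for $f^{(n)}$. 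Your route invokes the ``no rational solution'' principle only once and stays entirely at the level of the polynomials, at the cost of a more entangled induction --- which you correctly flag, including the need to check $n=1,2$ by hand so that the indices $P_{n-2}$, $Q_{n-2}$ are meaningful --- while the paper's route is shorter because the three-term recurrence for $P_n,Q_n$ is an immediate shadow of the one for $f^{(n)}$. Part (c) is the same Casoratian induction in both. One trivial slip: $f$ satisfies $y'-xy=-1$, not $y'-xy=1$; this has no effect on the non-rationality argument.
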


\begin{proof} $(a)$ From~\eqref{1} and the equalities $\Phi'(x)=\phi(x)$ and
$\phi'(x)=-x\phi(x),$ we arrive to
\begin{equation}
\label{aprima} f'(x)=xf(x)-1\qquad\mbox{and}\qquad
f''(x)=(x^2+1)f(x)-x.
\end{equation}
In general, for $n\ge 2$, thanks to Leibnitz formula for the
derivative of order $n$ of a product of functions, we have the
recurrence
\begin{equation}
\label{rec-der1} f^{(n)}(x)=xf^{(n-1)}(x)+(n-1) f^{(n-2)}(x),
\end{equation}
where $f^{(0)}=f.$ Given the form of $f'$ and $f''$ in
(\ref{aprima}), it is deduced by induction that~\eqref{rec-der2}
holds, where $P_n$ and $Q_n$ are polynomials of degree $n$ and $n-1$
respectively (for $n\ge 1$). For $n=0$ the corresponding polynomials
are $P_0(x)=1$ and $Q_0(x)=0$.
The first polynomials are
$$ P_1(x)=x,\ P_2(x)=x^2+1, \ P_3(x)=x^3+3 x, \ P_4(x)=x^4+6x^2+3,\ P_5(x)=x^5+10x^3+15x,$$
and
$$ Q_1(x)=1,\ Q_2(x)=x,\ Q_3(x)=x^2+2, \ Q_4(x)=x^3+5x, \ Q_5(x)=x^4+9x^2+8.$$
Note that from (\ref{rec-der2}) and (\ref{rec-der1}),
\begin{equation}
\label{igualtatpol}
\big(P_n(x)-xP_{n-1}(x)-(n-1)P_{n-2}(x)\big)f(x)=Q_n(x)-xQ_{n-1}(x)-
(n-1)Q_{n-2}(x).
\end{equation}
For any $n$ given, the factor $P_n(x)-xP_{n-1}(x)-(n-1)P_{n-2}(x)$
is a polynomial of degree $n$, and thus, if it is not identically
zero we get that $f$ is a rational function. On the other hand
remember that by \eqref{aprima},  $f'(x)=xf(x)-1$. It is not
difficult to prove that  this equation has no rational solutions:
$f$ cannot be a polynomial by degree considerations. Hence $f$
should have a (real or complex) pole. This pole is also present in
$f'$ but with a different order, giving again the impossibility of
$f$ to be a solution of the equation.
 Then, the polynomials $P_n$
and $Q_n$ follow the recurrences~\eqref{recP} and~\eqref{recQ} as we
wanted to prove.

$(b)$ Notice that
\begin{align*}
\label{derivadesPQ} P_{n+1}(x)f(x)-Q_{n+1}(x)&=f^{(n+1)}(x)=\big(P_n
(x) f(x)-Q_n(x)\big)'\notag \\&
=P'_n(x) f(x)+P_n(x)f'(x)-Q'_n(x)  \notag\\
&=\big(P'_n(x)+xP_n(x)\big) f(x)-\big(P_n(x)+Q'_n(x)\big).
\end{align*}
Then,
\begin{equation}
\label{derPNmes1} P_{n+1}(x)=P'_n(x)+xP_n(x) \qquad\mbox{and}\qquad
Q_{n+1}(x)=P_{n}(x)+Q'_{n}(x).
\end{equation}
Joining the  above left equality with the recurrence (\ref{recP})
for $P_{n+1}$ it follows the first formula in (\ref{derivada}).
Similarly, combining the right one  with the recurrence for
$Q_{n+1}$  in
 (\ref{recQ}), gives the second equality in~\eqref{derivada}.

$(c)$ Equality~\eqref{prodPQ} follows easily by induction, changing
$P_n$ and $Q_n$ by their corresponding  expressions given in
(\ref{recP}) and (\ref{recQ}), respectively.
\end{proof}

\begin{remark}
\label{Liouville} Our proof of item $(a)$ uses one of the steps of the proof that
 $\int e^{-x^2/2}\,dx$
has no primitive expressible in terms of elementary functions. To
realize  this fact
 it is  convenient  a  short comment on this proof. It   goes back to the work
of Liouville 1835 (\cite{l}) and is a consequence of a much more
general result. Liouville Theorem together with its proof can also
be consulted for instance in Rosenlicht \cite{r}. In Conrad
\cite{C}, there is a simple and nice corollary of Liouville's
result: {\it  Consider a function $F(x)=p(x)e^{q(x)}$, where both
$p$ and $q$ are rational functions with $p$ not identically zero and
$q$ not constant. Then $F$ can be integrated in elementary terms if
and only if there exists a rational function $R$ such that
\begin{equation}\label{cns} R'(x)+q'(x)R(x)=p(x).
\end{equation}
Moreover, in this case $\int p(x) e^{q(x)}dx=R(x) e^{q(x)}+k$ for
some constant $k$.} Notice that for the integral  $\int
e^{-x^2/2}\,dx$, the differential equation \eqref{cns} is precisely
the differential equation \eqref{aprima} satisfied by $f$, which, as
we have already argued in the proof of the above lemma, has no
rational solutions.
\end{remark}


\begin{proof}[Proof of Theorem~\ref{main}] We start proving the inequality
\begin{equation}\label{dn}
(-1)^n
\frac{f^{(n)}(x)}{P_n(x)}=(-1)^n\left(f(x)-\frac{Q_n(x)}{P_n(x)}
\right)>0.
\end{equation}
When $n$ is even  we have to see that (for $n$ odd, reverse the
inequality),  $f(x)>{Q_n(x)}/{P_n(x)},$
 or equivalently,
\begin{equation}
\label{desigualtatPhi} 1-\Phi(x)>\phi(x)\frac{Q_n(x)}{P_n(x)}.
\end{equation}
This inequality is  proved with the clever   argument that Feller
(\cite[page 175]{fellerVol1}) uses to   deduce  the asymptotic
 expansion (\ref{asy_feller}):
We will see that the negative of the derivatives of  those of
(\ref{desigualtatPhi})
 satisfy the inequality
\begin{equation}
\label{desigualt2}
\phi(x)>-\bigg(\phi(x)\frac{Q_n(x)}{P_n(x)}\bigg)'
\end{equation}
(reversed inequality if $n$ is odd).  Hence, integrating both
members of that inequality from $x$ to infinity we obtain
(\ref{desigualtatPhi}).

To prove (\ref{desigualt2}) we first claim that for every $n\ge 1$,
\begin{equation}
\label{pquadrat}
xP_n(x)Q_n(x)-Q'_n(x)P_n(x)+Q_n(x)P'_n(x)=P_n^2(x)+(-1)^{n+1}n!.
\end{equation}
Using this claim we get that the right  hand side of
(\ref{desigualt2}) is
\begin{multline}
\label{derivadaQP}
x\phi(x)\frac{Q_n(x)}{P_n(x)}-\phi(x)\frac{Q'_n(x)P_n(x)-Q_n(x)P'_n(x)}{P_n^2(x)}
\notag\\=\phi(x)\Bigg(\frac{xP_n(x)Q_n(x)-Q'_n(x)P_n(x)+Q_n(x)P'_n(x)}{P_n^2(x)}\Bigg)=\phi(x)\left(1+\frac{(-1)^{n+1}n!}{P_n^2(x)}\right).
\end{multline}
Then,  (\ref{desigualt2}) for $n$ even, and the reversed inequality
for $n$ odd, will  follow.

To prove the claim, in (\ref{pquadrat}), we change $P'_n$ and $Q'_n$
by their corresponding  expressions in~(\ref{derivada}). It turns
out that to prove (\ref{pquadrat}) is equivalent to
prove~\eqref{prodPQ}. Hence, from (c) in Lemma~\ref{l3},  the proof
of~\eqref{dn}  is complete.

Let us prove  the right-hand inequality of~\eqref{fita}. For $n$
even, thanks to identity (\ref{prodPQ}),
\begin{equation}\label{diff}\frac{Q_{n+1}(x)}{P_{n+1}(x)}-\frac{Q_{n}(x)}{P_{n}(x)}=
\frac{n!}{P_{n+1}(x)P_n(x)}.\end{equation}
 Recall that the
polynomials $P_n$ are monic and all their coefficients are
nonnegative. Then $P_{n+1}(x)P_n(x)>x^{2n+1}.$ Thus,
$$0<\frac{Q_{n+1}(x)}{P_{n+1}(x)}-\frac{Q_{n}(x)}{P_{n}(x)}<\frac{n!}{x^{2n+1}},$$
and the result follows  because
\begin{equation}\label{ff}
\frac{Q_{2n}(x)}{P_{2n}(x)}<f(x)<\frac{Q_{2n+1}(x)}{P_{2n+1}(x)}.\end{equation}
Finally, to prove that when $x>0$,
\[
\lim_{n\to\infty}\frac{P_n(x)}{Q_n(x)}=f(x),
\] we will need a suitable lower bound for $P_{n+1}(x)P_n(x).$ We
will consider the independent term and the coefficient of $x$ of
$P_n(x)$. By  (\ref{PnExplicit}),
 $P_n(0)=(n-1)!!$, if $n$ is even,   and 0 otherwise,
where $k!!$ is the double factorial of a positive integer, defined
recurrently for $k\ge2$ as $k!!=k\times (k-2)!!,$ with  the
conventions $0!!=1!!=1$. Also,
$P_n[1]=
n!!$ if $n$ is odd, and 0 otherwise.

 Using again that all coefficient of $P_n$ are non negative, and
$P_{n+1}(0)=0$, for every $x>0$, we can bound the product
$P_{n+1}(x)P_n(x)$ in the following way:
$$P_{n+1}(x)P_n(x)>x P_{n+1}[1]P_n(0)= x\,(n+1)!! (n-1)!!.$$
Thus, for $n$ even, equality~\eqref{diff} gives
$$0<\frac{Q_{n+1}(x)}{P_{n+1}(x)}-\frac{Q_{n}(x)}{P_{n}(x)}<\frac{n!}{x (n+1)!! (n-1)!!}.$$
Fixed $n$, the right hand side goes to 0 when $x\to \infty$, and
fixed $x>0$,   by Stirling formula, it goes to 0 when $n\to \infty$.
 For any $x>0,$ the
monotonicity of the even and odd terms of the sequence
$Q_n(x)/P_n(x)$ is an straightforward consequence of item~(c) of
Lemma~\ref{l3}. Then the theorem follows.
\end{proof}

\begin{proof}[Proof of Corollary~\ref{coromain}]

{\it (i)} Notice that because $P_n(x)>0$ for all $x>0$ the fact that
$(-1)^nf^{(n)}(x)>0$ for all $x>0$ is a straightforward consequence
of~\eqref{fita}.

{\it (ii)} Consider the continued fraction (\ref{fracciocont}) of $f$.
 The {\it three terms recurrence
relation}
 that follow the $n^{\rm th}$ numerator
and the $n^{\rm th}$ denominator of a continued fraction (see Cuyt
{\it et al.} \cite[page 13]{cuyt})
  are exactly the recurrences for $P_n$
 and $Q_n$  given by (\ref{recP}) and (\ref{recQ}). So $Q_n/P_n$ is the $n$ convergent of
 (\ref{fracciocont}). Then,  the properties of  continued fractions can be used to
 give an alternative  proof of items (b) and (c) of Lemma~\ref{l3};
 see  Pinelis~\cite{p} and Shenton~\cite{se}. In fact, there is a
 natural procedure for finding a continued fraction expansion for
 a function satisfying a first order differential equation with
 polynomials entries that goes back to Laguerre, see~\cite{lag}.
 Notice that our Mills ratio $f$ belongs to this class of functions.

{\it (iii)} Given a generic function $S$ we define $\widetilde S(y)=
S(1/y).$ Then, from~\eqref{asy_feller}, \eqref{jj} and \eqref{ineq}
it holds that
\begin{equation}\label{yyy}
\widetilde f(y)=\widetilde J_n(y)+O(y^{2n+1}),
\end{equation}
where $\widetilde J_n(y)= y-y^3+\cdots +(-1)^{n+1}
(2n-3)!!y^{2n-1},$ and, as usual, we write that $k(x)=O(h(x))$ when $x\to \infty$ if there is
a point  $x_0$ and a constant $C$ such that $\vert k(x)\vert \le Ch(x)$, for all $x>x_0$. Consider now the rational functions
\[
\frac{\widetilde Q_{2m}(y)}{\widetilde
P_{2m}(y)}=y\,\frac{R_{m-1}(y^2)}{S_{m}(y^2)},\qquad\frac{\widetilde
Q_{2m+1}(y)}{\widetilde P_{2m+1}(y)}=y\, \frac{T_{m}(y^2)}{U_{m}(y^2)},
\]
where $R_j,\,S_j,\, T_j$ and $U_j$ are polynomials of degree $j$. Notice
that by Theorem~\ref{main}, $\widetilde Q_n(y)/\widetilde
P_n(y)-\widetilde f(y)=O(y^{2n+1})$. Hence, by~\eqref{yyy},
\[
\widetilde J_n(y)- \frac{\widetilde Q_{n}(y)}{\widetilde
P_{n}(y)}=O(y^{2n+1}).\] This equality implies that  $\widetilde
Q_n(y)/\widetilde P_n(y)$ are the Pad\'{e} approximants of $\widetilde
J_n(y)$ and $\widetilde f(y)$ at the origin, of order $[n-1,n]$ when
$n$ is even and of order $[n,n-1]$ when $n$ is odd because all the
derivatives until order $2n$ at the origin coincide. Hence the
corresponding $Q_n(x)/P_n(x)$ are the Pad\'{e}--Laurent approximants of
$f(x)$ at infinity. Finally, notice that by symmetry arguments, when
$n$ is even the Pad\'{e} approximants of orders $[n-1,n], [n-1,n+1],
[n,n]$ and $[n,n+1]$ coincide. Similarly, when $n$ is odd the
approximants that conicide are the ones having orders $[n,n-1],
[n,n]$ and $[n+1,n]$.

 {\it (iv)}  The recurrence of the monic Hermite polynomials with respect to
 the weight function $e^{-x^2/2}$ is (see NIST \cite[Table 18.9(ii)]{NIST10})
 $$H\!e_{n}(x)=xH\!e_{n-1}(x)-(n-1)H\!e_{n-2}(x),$$
 with initial conditions $H\!e_0(x)=1$ and $H\!e_1(x)=x.$
 Comparing with the recurrence of $P_n$ given in (\ref{recP}) the property
 is easily proved.
\end{proof}

\subsection{A comparison between  $J_{n}$ and $Q_n/P_n$}
\label{Subsec:Comparison}

Notice that by~\eqref{jj},
\begin{equation}\label{dj}
J_{n+1}(x)=J_n(x)+(-1)^{n+2} \frac{
(2n-1)!!}{x^{2n+1}}\quad\mbox{and}\quad J'_{n}(x)=xJ_{n+1}(x)-1.
\end{equation}
Next result shows that $J_n$ approximates worst $f$ that $Q_n/P_n$.
This fact corroborates once more the general believe that Pad\'{e}
approximants are better than the corresponding truncated  series.

\begin{lemma}\label{ppp}
For $x>0$ and $m\ge 1$,
$$J_{2m}(x)<\frac{Q_{2m}(x)}{P_{2m}(x)}<f(x)<\frac{Q_{2m+1}(x)}{P_{2m+1}(x)}
<J_{2m+1}(x).$$

\end{lemma}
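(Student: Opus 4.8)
The plan is to prove the four-term chain of inequalities in Lemma~\ref{ppp} by establishing the two outer inequalities
$$J_{2m}(x)<\frac{Q_{2m}(x)}{P_{2m}(x)}\qquad\text{and}\qquad \frac{Q_{2m+1}(x)}{P_{2m+1}(x)}<J_{2m+1}(x),$$
since the middle inequalities
$$\frac{Q_{2m}(x)}{P_{2m}(x)}<f(x)<\frac{Q_{2m+1}(x)}{P_{2m+1}(x)}$$
are already known: they are exactly \eqref{ff} from the proof of Theorem~\ref{main}. So the real content of the lemma is that the Pad\'{e}--Laurent convergents $Q_n/P_n$ lie strictly between the truncated asymptotic series $J_n$ and the true value $f$, hence squeeze $f$ more tightly.

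First I would try to control the sign of the difference $Q_n/P_n-J_n$ directly. A clean route is to compare both objects against $f$ using the two error bounds already in hand: from \eqref{fita} we have $0<(-1)^n\big(f-Q_n/P_n\big)<n!/x^{2n+1}$, and from \eqref{ineq} we have $0<(-1)^n\big(f-J_n\big)<(2n-1)!!/x^{2n+1}$. Both $Q_n/P_n$ and $J_n$ approach $f$ from the \emph{same} side (below for even $n$, above for odd $n$), so to order them it suffices to compare the sizes of their errors. I would form the difference
$$(-1)^n\Big(\frac{Q_n(x)}{P_n(x)}-J_n(x)\Big)=(-1)^n\Big(f(x)-J_n(x)\Big)-(-1)^n\Big(f(x)-\frac{Q_n(x)}{P_n(x)}\Big)$$
and aim to show it is positive, which would give $J_n<Q_n/P_n$ for $n$ even and $J_n>Q_n/P_n$ for $n$ odd, precisely the two outer inequalities. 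The crude bounds above are \emph{not} by themselves conclusive, since they only say each error lies in an interval, so the main obstacle is to show the $J_n$-error genuinely dominates the $Q_n/P_n$-error rather than merely having a larger upper bound.

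The key step, and the hard part, will therefore be to get a sharp handle on the leading behaviour of each error as $x\to\infty$ and, crucially, to promote an asymptotic ordering into an ordering valid for \emph{all} $x>0$. For the asymptotics I would use the Pad\'{e}--Laurent characterization from item \emph{(iii)} of Corollary~\ref{coromain}: since $Q_n/P_n$ matches the Taylor coefficients of $\widetilde f$ at the origin to order $2n$, the first nonvanishing term of $(-1)^n\big(f-Q_n/P_n\big)$ is of order $x^{-(2n+1)}$ with a coefficient that I can read off from $P_n^2$ via the identity \eqref{diff}, namely $n!/(P_{n+1}P_n)\sim n!/x^{2n+1}$, whereas the leading term of $(-1)^n(f-J_n)$ is exactly $(2n-1)!!/x^{2n+1}$ by \eqref{ineq}. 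Because $n!<(2n-1)!!$ for $n\ge 2$, the $J_n$-error dominates for large $x$; the subtle point is uniformity down to small $x$.

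To obtain the inequality on all of $(0,\infty)$ rather than just asymptotically, I would convert the claim into a statement about a single polynomial and invoke the paper's announced methodology. Writing $(-1)^n\big(Q_n/P_n-J_n\big)$ over the common denominator $x^{2n-1}P_n(x)$ (legitimate since $P_n>0$ on $(0,\infty)$), the sign of the difference is the sign of the numerator, which is a polynomial in $x$ with rational coefficients. Establishing that this polynomial is positive for all $x>0$ is exactly the kind of root-isolation problem that the Sturm-theorem apparatus in the Appendix is designed to settle, although here I would hope the polynomial's coefficients turn out to be manifestly nonnegative (via the recurrences \eqref{recP}, \eqref{recQ} and the explicit coefficient data $P_n(0)=(n-1)!!$, $P_n[1]=n!!$ used at the end of the proof of Theorem~\ref{main}), so that positivity follows by inspection without any root counting. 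Either way, once the outer inequalities are secured for all $x>0$ and concatenated with the already-proved \eqref{ff}, the full chain of Lemma~\ref{ppp} follows.
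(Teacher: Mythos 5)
Your reduction is sound as far as it goes: the middle inequalities are indeed \eqref{ff}, both $J_n$ and $Q_n/P_n$ sit on the same side of $f$, and the whole content is the ordering of the two errors. But the proof has a genuine gap at exactly the point you flag as ``the hard part.'' The asymptotic comparison of leading coefficients ($n!$ versus $(2n-1)!!$) only yields the inequality for $x$ large. To cover all $x>0$ you offer two options, and neither closes the argument. Sturm's theorem is an algorithm applied to one concrete polynomial with rational coefficients at a time; the lemma is a statement for every $m\ge 1$, i.e.\ for an infinite family of polynomials whose degrees grow with $n$, so root isolation cannot establish it uniformly in $n$. The alternative --- that the numerator of $(-1)^n\bigl(Q_n/P_n-J_n\bigr)$ over the denominator $x^{2n-1}P_n(x)$ has manifestly nonnegative coefficients --- appears to be true (for $n=2,3,4$ the numerators are $1$, $9$ and $81x^2+45$ respectively), but you have not proved it, and it would require its own induction on the recurrences. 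As written, the proposal proves the outer inequalities only for sufficiently large $x$.

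For contrast, the paper closes this uniformly in $n$ with Feller's integration trick, the same device used in the proof of Theorem~\ref{main}: to show $\phi J_n<\phi\,Q_n/P_n$ for $n$ even it suffices to compare the negatives of the derivatives pointwise and integrate from $x$ to $\infty$ (both sides vanish at infinity). Using \eqref{dj} the left-hand derivative expression collapses to $\phi(x)\bigl(1-(2n-1)!!/x^{2n}\bigr)$, and using the identity \eqref{pquadrat} the right-hand one is $\phi(x)\bigl(1+(-1)^{n+1}n!/P_n^2(x)\bigr)$, so the whole lemma reduces to $n!/P_n^2(x)<(2n-1)!!/x^{2n}$, which is immediate from $P_n(x)\ge x^n$ and $n!<(2n-1)!!$. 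If you want to salvage your route, you would need to prove your nonnegative-coefficients claim by induction; otherwise the derivative comparison is the cleaner path.
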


\begin{proof} As in the proof of Theorem~\ref{main}, we use again  the  idea
 from Feller (\cite[page 175]{fellerVol1}):
 We  prove that for $n$ even
(the proof for $n$ odd is analogous),
$$\phi(x)J_n(x)<\phi(x)\frac{Q_n(x)}{P_n(x)}.$$
To this end, it suffices to check the inequality with the negatives
of the derivatives of both sides of  previous inequality,
\begin{equation}
\label{JQP}
-\phi(x)J'_n(x)+x\phi(x)J_n(x)<-\phi(x)\bigg(\frac{Q_n(x)}{P_n(x)}\bigg)'
+x\phi(x)\frac{Q_n(x)}{P_n(x)},
\end{equation}
and then  integrate from $x$ to infinite. By \eqref{dj} the left
hand side of \eqref{JQP} is
\[
-\phi(x)J'_n(x)+x\phi(x)J_n(x)=\phi(x)\left(1+x\big(J_n(x)-J_{n+1}(x)
\big)\right)=\phi(x)\left(1-\frac{(2n-1)!!}{x^{2n}}\right).
\]
Therefore, from the above equality and  (\ref{pquadrat}), the
inequality (\ref{JQP}) is equivalent to
$$\frac{n!}{P_n^2(x)}<\frac{(2n-1)!!}{x^{2n}},$$
which is evident.
\end{proof}

\begin{remark}
\label{Jneg} Note that $J_{2n}$ is negative in an interval
$[0,\alpha_n)$ for an increasing sequence of positive
numbers $\{\alpha_n\}_n$; on the contrary,
$Q_n(x)/P_n(x)
>0$ for $x>0$, see Figure \ref{grafic}.
This is important when we want to bound expressions  involving
$f$; see the proof of Theorem~\ref{main2}.
\end{remark}

\begin{figure}[htb]
\centering
 \includegraphics[width=8cm]{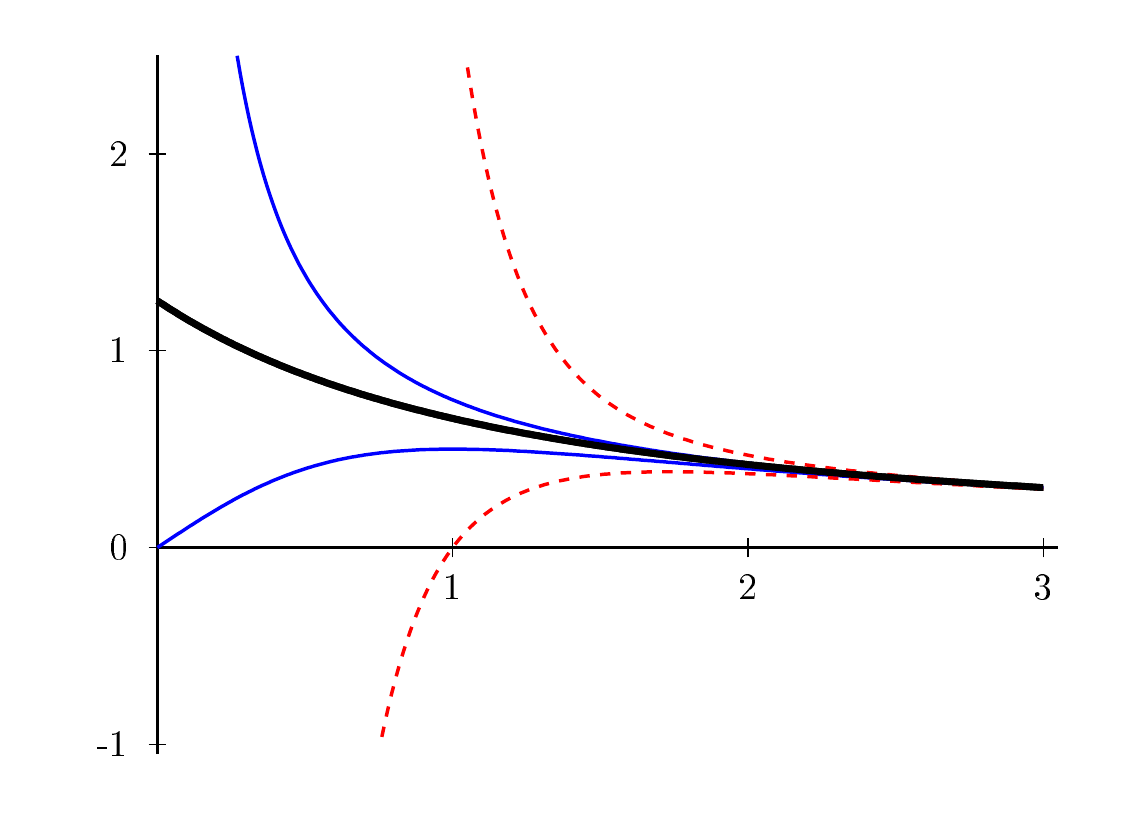}
\caption{Bold solid line: function $f$. Upper and lower light solid
lines: rational functions $Q_3/P_3$ and  $Q_2/P_2$, respectively.
Upper and lower dashed lines: rational functions $J_3$ and $J_2$
respectively.} \label{grafic}
\end{figure}

\section{An approximation procedure}\label{se:3}

\subsection{Taylor formula}

To approach $f$ in a neighborhood of the origin we need to compute
its Taylor expansion  at $x=0.$ The alternating  of signs of the
derivatives $f^{(n)}$ implies that the Taylor series of $f$ at zero
enjoys very nice properties. Let  $T_n$ be the Taylor
polynomial of $f$ of order $n$ at zero.
 Using the
Lagrange remainder, for all $x\ge 0$
$$f(x)=T_n(x)+\frac{1}{(n+1)!}\,f^{(n+1)}(x_0)\, x^{n+1},$$
for some $x_0\in [0,x]$. Then, thanks to the sign of
$f^{(n+1)}(x_0)$ given in Theorem \ref{main}, if $n$ is odd, for
$x>0$, $T_n(x)<f(x)<T_{n+1}(x).$ What is more, given that if $n$ is
odd, $f^{(n)}$ is increasing and
 $f^{(n+1)}$ is decreasing, and if $n$ is even happens the contrary,
the remainder of $T_n$ and $T_{n+1}$ can be expressed in a more
convenient form,
 and
$$f(x)=T_n(x)+\xi_n\, \frac{1}{(n+1)!}\,f^{(n+1)}(0)\, x^{n+1},$$
where $\xi_n\in (0,1).$ Then it is said that the Taylor
 series is  {\it enveloping} (see Small \cite[Section 2.3]{small10}).

The coefficients of the Taylor series are  computed from
$f^{(n)}(0)=P_n(0)f(0)+Q_n(0)$ and are
$$f^{(n)}(0)=\begin{cases}
-(n-1)!!,& \text{if $n$ is odd},\\
(n-1)!!\, \dfrac{\sqrt{2\pi}}{2},& \text{if $n$ is even}.
\end{cases}$$
 Notice that these values of $f^{(n)}(0)$ can
also be obtained from equality~\eqref{lap}. We summarize the above
results in the following lemma:

\begin{lemma}
The Taylor series of $f$ at $0$ is convergent in the whole $[0,\infty)$
and it  is given by
$$f(x)=\frac{\sqrt{2\pi}}{2}-x+\frac{\sqrt{2\pi}}{4}\, x^2-\frac{1}{3}\,x^3+
\frac{\sqrt{2\pi}}{16}\, x^4-\frac{1}{15}\,x^5+\cdots,$$ where the
coefficient of $x^n$ is
\begin{equation*}
\begin{cases}
 \dfrac{\sqrt{2\pi}}{2\, n!!},&  \text{if $n$ is even},\\
\\
 -\dfrac{1}{n!!}, & \text{if $n$ is odd},
\end{cases}
\end{equation*}
and it is enveloping. In particular, for all $n\ge1$ and  for every
$x>0$, $T_{2n-1}(x)<f(x)<T_{2n}(x)$ and the remainder of the Taylor
polynomial is bounded in absolute value by the first neglected term.
\end{lemma}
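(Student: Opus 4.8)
The plan is to establish the three claimed properties of the Taylor series of $f$ at the origin in sequence: convergence on all of $[0,\infty)$, the explicit coefficient formula, and the enveloping property. The essential input is Theorem~\ref{main}, which controls the signs and magnitudes of the derivatives $f^{(n)}$, together with the representation $f^{(n)}(0)=P_n(0)f(0)+Q_n(0)$ coming directly from \eqref{rec-der2}.

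First I would compute the Taylor coefficients explicitly. From \eqref{rec-der2} evaluated at $x=0$ we have $f^{(n)}(0)=P_n(0)f(0)-Q_n(0)$, where $f(0)=\sqrt{2\pi}/2$. Using $P_n(0)=(n-1)!!$ for $n$ even and $0$ for $n$ odd (from \eqref{PnExplicit}), and a parallel computation of $Q_n(0)$ via the recurrence \eqref{recQ}, one obtains the stated values: $f^{(n)}(0)=(n-1)!!\,\sqrt{2\pi}/2$ for $n$ even and $f^{(n)}(0)=-(n-1)!!$ for $n$ odd. Dividing by $n!$ and simplifying $(n-1)!!/n!=1/n!!$ gives the coefficient $\sqrt{2\pi}/(2\,n!!)$ for even $n$ and $-1/n!!$ for odd $n$. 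This step is essentially bookkeeping with the double factorial identity and should present no difficulty.

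Next I would address convergence on $[0,\infty)$ using the Lagrange remainder already set up in the text, $f(x)-T_n(x)=f^{(n+1)}(x_0)x^{n+1}/(n+1)!$ for some $x_0\in[0,x]$. The key observation is that the derivatives alternate in sign (Theorem~\ref{main}) and that, for fixed $x$, the remainder can be controlled in terms of $f^{(n+1)}(0)$ thanks to monotonicity. Concretely, since $|f^{(n+1)}(0)|=n!!$ (up to the $\sqrt{2\pi}/2$ factor on even orders), the absolute value of the $(n+1)$-st term behaves like $x^{n+1}/(n+1)!!$, and because $(n+1)!!\sim\sqrt{2/\pi}\,\Gamma(n/2+1)2^{(n+1)/2}$ grows faster than geometrically, the terms tend to zero for every fixed $x$, giving convergence of the series to $f$ on all of $[0,\infty)$.

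Finally, the enveloping property and the bounds $T_{2n-1}(x)<f(x)<T_{2n}(x)$ follow from the sign pattern of the derivatives. Since $f^{(n+1)}$ has constant sign $(-1)^{n+1}$ on $(0,\infty)$ by Theorem~\ref{main}, the Lagrange remainder has sign $(-1)^{n+1}$, which immediately yields that $T_n$ underestimates $f$ when $n$ is odd and overestimates it when $n$ is even. To write the remainder as $\xi_n f^{(n+1)}(0)x^{n+1}/(n+1)!$ with $\xi_n\in(0,1)$, I would use that $f^{(n+1)}$ is monotone on $[0,x]$ (increasing when $n$ is odd, decreasing when $n$ is even, as noted in the text), so $f^{(n+1)}(x_0)$ lies strictly between $0$ and $f^{(n+1)}(0)$; this pins the ratio $\xi_n=f^{(n+1)}(x_0)/f^{(n+1)}(0)$ into $(0,1)$ and simultaneously bounds the remainder in absolute value by the first neglected term $|f^{(n+1)}(0)|x^{n+1}/(n+1)!$. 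I expect the main obstacle to be the clean justification that $f^{(n+1)}$ stays between $0$ and its value at the origin, which requires combining the alternating-sign statement of Theorem~\ref{main} with the monotonicity of consecutive derivatives rather than just their signs.
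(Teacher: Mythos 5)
Your proposal follows essentially the same route as the paper: coefficients from \eqref{rec-der2} evaluated at $0$ via $P_n(0)$ and $Q_n(0)$, the Lagrange remainder combined with the sign pattern of Theorem~\ref{main} for the bounds $T_{2n-1}<f<T_{2n}$, monotonicity of $f^{(n+1)}$ to pin $\xi_n\in(0,1)$ and bound the remainder by the first neglected term, and superfactorial growth of $(n+1)!!$ for convergence; you even correctly use the minus sign in $f^{(n)}(0)=P_n(0)f(0)-Q_n(0)$ where the paper's text has a typographical $+$. The only slip is that you state $f^{(n+1)}$ is increasing for $n$ odd and decreasing for $n$ even, which is reversed (its derivative $f^{(n+2)}$ has sign $(-1)^{n+2}$), but this does not affect your conclusion since in either case $|f^{(n+1)}|$ decreases from its value at the origin.
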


Combining this  lemma and Theorem~\ref{main} we get the following proposition:

\begin{proposition}\label{propmain} For any positive integer numbers $k,\ell,m,n$
and $x>0$ it holds that
\[
\max\left(T_{2k-1}(x),\frac{Q_{2\ell}(x)}{P_{2\ell}(x)}\right)
<f(x)<\min\left(T_{2m}(x),\frac{Q_{2n-1}(x)}{P_{2n-1}(x)}\right).
\]
\end{proposition}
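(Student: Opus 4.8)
The plan is to assemble the desired chain of inequalities directly from the two estimates already established in the excerpt, since Proposition~\ref{propmain} is essentially a packaging result. The key observation is that all four bounding functions have already been compared to $f$ individually; what remains is to collect these comparisons and take the $\max$ and $\min$ over the relevant indices. First I would recall from the enveloping property of the Taylor series (the Lemma immediately preceding the Proposition) that for every positive integer $k$ and every $x>0$ we have $T_{2k-1}(x)<f(x)$, and for every positive integer $m$ we have $f(x)<T_{2m}(x)$. These are exactly the lower and upper Taylor bounds of even/odd order.

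Next I would invoke Theorem~\ref{main}, and in particular the inequality~\eqref{ff} derived in its proof, which states that for every $n\ge1$
\[
\frac{Q_{2n}(x)}{P_{2n}(x)}<f(x)<\frac{Q_{2n+1}(x)}{P_{2n+1}(x)}.
\]
Applying this with $n=\ell$ gives the rational lower bound $Q_{2\ell}(x)/P_{2\ell}(x)<f(x)$, and applying it with $n=n-1$ (legitimate since $n\ge1$ forces $n-1\ge0$, and the odd convergents of every order exceed $f$) gives $f(x)<Q_{2n-1}(x)/P_{2n-1}(x)$. Here I would be slightly careful about the edge case $n=1$, where $Q_1(x)/P_1(x)=1/x$; one checks that the odd-indexed convergents exceed $f$ for all odd indices $\ge1$, which is precisely the content of the monotone-decreasing statement $Q_{2m+1}/P_{2m+1}\searrow f$ in Theorem~\ref{main} together with~\eqref{ff}.

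Finally I would combine the four pieces. Since $T_{2k-1}(x)<f(x)$ and $Q_{2\ell}(x)/P_{2\ell}(x)<f(x)$ hold simultaneously for arbitrary $k$ and $\ell$, the larger of the two left-hand quantities still lies strictly below $f(x)$, which yields
\[
\max\!\left(T_{2k-1}(x),\frac{Q_{2\ell}(x)}{P_{2\ell}(x)}\right)<f(x).
\]
Dually, since $f(x)<T_{2m}(x)$ and $f(x)<Q_{2n-1}(x)/P_{2n-1}(x)$, the function $f(x)$ lies strictly below the smaller of the two right-hand quantities, giving the matching upper bound. Taking the two displays together produces the claimed inequality. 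Honestly, there is no real obstacle here: the proposition is a corollary whose entire content is the simultaneity of the four strict inequalities and the elementary fact that $a<c$ and $b<c$ imply $\max(a,b)<c$ (and dually for the minimum). The only point deserving a line of justification is the index bookkeeping, namely that the odd-order Taylor and rational bounds genuinely overshoot and the even-order ones genuinely undershoot $f$ for \emph{all} admissible values of the free integers, which is exactly what the preceding Lemma and Theorem~\ref{main} guarantee.
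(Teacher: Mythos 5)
Your proposal is correct and is essentially the paper's own argument: the paper gives no separate proof, simply noting that the proposition follows by combining the enveloping-Taylor lemma (which gives $T_{2k-1}(x)<f(x)<T_{2m}(x)$) with Theorem~\ref{main} via inequality~\eqref{ff} (which gives $Q_{2\ell}/P_{2\ell}<f<Q_{2n-1}/P_{2n-1}$). Your extra care with the index $n=1$ edge case is a harmless refinement of the same reasoning.
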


\subsection{The approximation procedure in action: $1/f$ is strictly convex}
\label{Sub:Main}
As a first application we  present a new proof of the following theorem:

\begin{theorem}\label{main2}
The reciprocal function of  Mills ratio  $1/f$ is strictly convex.
\end{theorem}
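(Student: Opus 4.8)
The plan is to reduce the convexity of $1/f$ to a single polynomial positivity statement that the procedure of Section~\ref{se:3} can settle. Writing $g=1/f$ and using $f'(x)=xf(x)-1$ from~\eqref{aprima}, one gets $g'=g^2-xg$ and hence
\begin{equation*}
g''=g\bigl(2g^2-3xg+x^2-1\bigr).
\end{equation*}
Since $g=1/f>0$ on $[0,\infty)$, strict convexity of $1/f$ is equivalent to $2g^2-3xg+x^2-1>0$; multiplying by $f^2$ this is in turn equivalent to
\begin{equation*}
D(x):=(x^2-1)f(x)^2-3xf(x)+2>0,\qquad x\ge0.
\end{equation*}
So the whole problem collapses to the single inequality $D>0$.

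The quantity $D$ is quadratic in $f$, so I cannot simply substitute the sandwiching bounds of Proposition~\ref{propmain}. The key observation is that $\partial D/\partial f=2(x^2-1)f-3x$ is negative at $f=f(x)$ for every $x\ge0$ (for large $x$ it is $\approx-x$, at $x=1$ it equals $-3$, and near $x=0$ it is $\approx-2f(0)$), and, because $\partial D/\partial f$ is monotone in $f$, it remains negative on the short interval between $f(x)$ and any sufficiently tight upper bound $U(x)\ge f(x)$. Consequently $D$ is decreasing in $f$ there, so $D(f(x))\ge D(U(x))$, and it suffices to prove $D(U)>0$, a genuine one--variable inequality in $x$.

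To finish I would split $[0,\infty)$ into a bounded part $[0,x_0]$ and a tail $[x_0,\infty)$. On the tail I take $U=Q_{2m+1}/P_{2m+1}$, an upper bound for $f$ by Theorem~\ref{main}; clearing the positive denominator $P_{2m+1}^2$ turns $D(U)>0$ into the positivity of a polynomial with integer coefficients, which Sturm's theorem (Appendix) decides on $(x_0,\infty)$. On $[0,x_0]$ I take $U=T_{2m}$, the even Taylor polynomial, which also dominates $f$; the only nuisance is the factor $\sqrt{2\pi}$ in its even coefficients, which I remove by replacing $\sqrt{2\pi}$ by sufficiently sharp rational bounds chosen, term by term, so as to preserve the inequality. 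Again $D(U)>0$ reduces to a polynomial with rational coefficients, checked by Sturm on $[0,x_0]$.

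The main obstacle is the extreme tightness of $D>0$. Expanding with~\eqref{asy_feller}, the leading terms cancel through order $x^{-4}$ and one finds $D(x)\sim 2/x^6\to0^+$, while $D$ is also small near $x=1$ (there $D(1)=2-3f(1)\approx0.03$). Thus crude bounds destroy the sign: the admissible window for $U$ has width only $\approx 2/x^{7}$ as $x\to\infty$, which forces the use of a high--order approximant with error $O\!\bigl(x^{-(4m+3)}\bigr)$ as in Theorem~\ref{main}. The real work is therefore to choose $m$ and $x_0$ so that, after clearing denominators, the resulting polynomial genuinely stays positive at infinity despite the massive cancellation in its top coefficients; verifying this asymptotic positivity, equivalently that the reduced polynomial has positive leading coefficient once the vanishing top terms are discarded, is where the argument is most delicate.
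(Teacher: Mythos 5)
Your proposal is correct and follows essentially the same route as the paper: the same reduction of convexity to the positivity of $D(x)=2+(x^2-1)f^2(x)-3xf(x)$ (the numerator of $(1/f)''$ over $f^3$), the same two--regime strategy (Taylor polynomials with $\sqrt{2\pi}$ replaced by rational convergents on a bounded interval, the convergents $Q_n/P_n$ on the tail), and the same Sturm--theorem endgame; the paper carries this out concretely with $T_7,T_8$ on $[0,1]$ and $Q_{10}/P_{10},\,Q_{11}/P_{11}$ on $(1,\infty)$, and your asymptotic estimate $D(x)\sim 2/x^{6}$ correctly identifies why such high--order approximants are needed. The one point where you diverge --- and slightly misjudge the paper --- is the claim that one ``cannot simply substitute the sandwiching bounds'' into a quantity quadratic in $f$: the paper does exactly that after splitting $(x^2-1)f^2=x^2f^2-f^2$ into a positive and a negative monomial, inserting the (positive) lower bound into $x^2f^2$ and the upper bound into $-f^2-3xf$ (this is why positivity of the lower bound matters, cf.\ Remark~\ref{Jneg}). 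Your alternative, exploiting that $\partial D/\partial f=2(x^2-1)f-3x$ stays negative between $f(x)$ and the chosen upper bound $U(x)$ so that only $D(U)>0$ need be checked, is also valid (the verification goes through since $U(x)<1/x$ for $x>1$ and $x^2-1\le 0$ on $[0,1]$) and in fact yields a slightly sharper minorant than the paper's $\widehat g$, at the cost of that extra monotonicity check; either way the remaining work is the explicit polynomial computation, which your outline leaves to be done but which the paper's choices confirm is feasible.
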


As we will see, the proof  will be reduced to show
that the function $g(x):=2+x^2f^2(x)-f^2(x)-3x f(x)$ is strictly
positive
 for all $x>0.$ To  prove this,
we will search for suitable piecewise rational functions,
$\underline R$ and $\overline R$, with coefficients in
$\mathbb{Q}^+\cup\{0\}$ and well defined on $[0,\infty),$ such that
for all $x>0$ it holds that
\[
0<\underline R(x)< f(x)< \overline R(x).
\]
Define $\widehat g(x):=2+x^2\,\underline{R}^2(x)-\overline
{R}\,^2(x)-3x \overline R\,(x)$. Then $g(x)>\widehat g(x)$ and
proving that for $x>0$, $\widehat g(x)>0$, the result will follow.

The fact that all the coefficients are rational is  crucial  in our
approach because the numerator of $\widehat g$ will be a
polynomial with rational coefficients and then rigorous analytic
methods, like  Sturm Theorem,  can be used to prove the inequality.
We recall Sturm Theorem in the Appendix.

These piecewise functions will be constructed from the class of
functions appearing in the statement of Proposition~\ref{propmain}.
More concretely, to get $\underline R$ and $\overline R$  we will
use modified Taylor polynomials in $[0,1]$ and the fractions
$Q_n/P_n$ given in the remainder unbounded interval.

\begin{proof}[Proof of Theorem~\ref{main2}]

 We have that
$$\bigg(\frac{1}{f(x)}\bigg)''=\frac{2+x^2f^2(x)-f^2(x)-3x f(x)}{f^3(x)}=\frac{g(x)}{f^3(x)}.$$
It suffices to prove that $g(x)>0$ (this was Birnbaum conjecture \cite{Bir50}).  As we have already explained, we
will bound below $g$ by a strictly positive function in two steps:
for $x\in[0,1]$ using Taylor formulas, and for $x>1$ using adequate
rational functions $Q_n/P_n$.

\bigskip

\noindent{\bf Step 1. $x\in[0,1]$.}  By Proposition \ref{propmain},
$T_7(x)<f(x)<T_8(x).$ Since the polynomials $T_n$ involve the
irrational number $\sqrt{2\pi}$ in the positive coefficients, we
look for  convenient rational approximations, which are
\begin{equation}
\label{aproRa} \frac{5}{2}<\sqrt{2\pi}<\frac{188}{75}.
\end{equation}
Such fractions are obtained computing the continued fraction of
$\sqrt{2\pi}$, $[2,1,1,37,4,1,1\ldots]$ and the corresponding
convergents $2,3,5/2,{188}/{75},{757}/{302},\ldots$

\begin{figure}[htb]
\centering
\includegraphics[width=8cm]{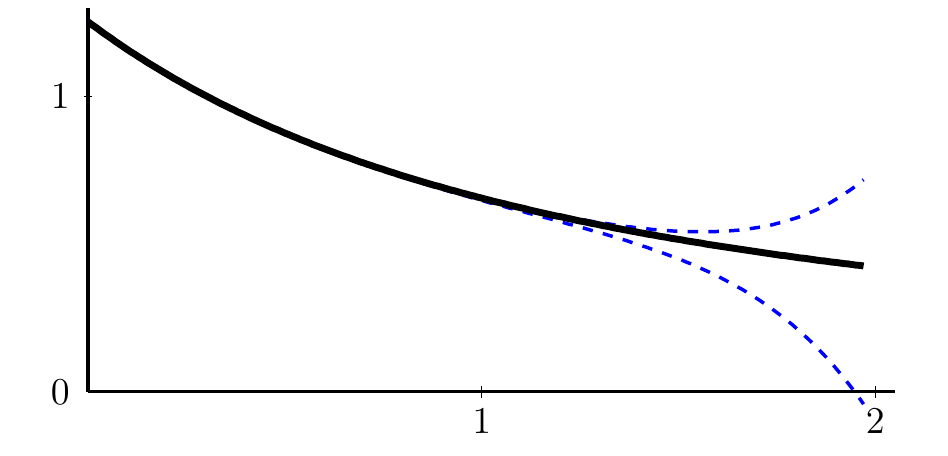}
\caption{Solid line: function $f$.  Upper and lower dashed lines:
polynomials  $T_{8,u}$ and $T_{7,\ell}$ respectively.} \label{fig2}
\end{figure}

Denote by $T_{7,\ell}(x)$ (respectively, $T_{8,u}(x)$) the
polynomial $T_7(x)$ (resp. $T_8(x)$) with the fraction in the left
hand side of (\ref{aproRa}) (respectively in the right hand side) instead
of the number $\sqrt{2\pi}$. We have that for $x\in[0,1]$,
$$0<T_{7,\ell}(x)<T_7(x)<f(x)<T_8(x)<T_{8,u}(x),$$
where \begin{align*} T_{7,\ell}(x)&=-{\frac {1}{105}}{x}^{7}+{\frac
{5}{192}}{x}^{6}-\frac1{15}{x}^{5}+{ \frac
{5}{32}}{x}^{4}-\frac13{x}^{3}+\frac58{x}^{2}-x+\frac54,
\\[1.mm]
T_{8,u}(x)&={\frac {47}{14400}}\,{x}^{8}-{\frac
{1}{105}}{x}^{7}+{\frac {47}{ 1800}}{x}^{6}-\frac1{15}{x}^{5}+{\frac
{47}{300}}{x}^{4}-\frac13{x}^{3}+ {\frac {47}{75}}{x}^{2}-x+{\frac
{94}{75}},
\end{align*}
see Figure~\ref{fig2}. We first check that $T_{7,\ell}(x)>0$ for
$x\in[0,1]$. This is done using that $T_{7,\ell}$ is a polynomial
with rational coefficients, and thus its roots can be studied by
finite algorithms. Specifically, by using Sturm Theorem (see the
appendix), the number of real roots of
 such a polynomial
in an interval with rational extremes, or in an infinite interval,
can be computed. In the case of $T_{7,\ell}$ there are no real roots
in the interval $[0,1]$; actually, the first positive real root can
be located at the interval $(31/16,2)$, as Figure \ref{fig2}
illustrates. Since $T_{7,\ell}(0)>0$, it follows
 the strict positivity of $T_{7,\ell}$ on $[0,1]$.

 Now, in the expression of $g(x)$, change
$f(x)$ in the terms with  positive sign  by $T_{7,\ell}(x)$, and by
$T_{8,u}(x)$ in the terms with negative sign. We get the polynomial
\[
G(x)=2+x^2T_{7,\ell}^2(x)-T_{8,u}^2(x)-3x T_{8,u}(x),
\]
which is
\begin{align*}
G(x)=&{\frac {813359}{10160640000}}\, {x}^{16}-{\frac {41}{94500}}
\,{x}^{15}+ {\frac {17139569}{10160640000}}\, {x}^{14}-{\frac
{139}{25200}}\, {x}^{ 13}+{\frac {1158121}{72576000}}
\,{x}^{12}\\&-{\frac {15671}{378000}} \,{x} ^{11}+{\frac
{1308953}{13440000}} \,{x}^{10}-{\frac {327233}{1512000}}
\,{x}^{9}+{\frac {1528967}{3780000}}\, {x}^{8}-{\frac
{9941}{14000}}\, { x}^{7}+{\frac {616499}{540000}}\,
{x}^{6}\\&-{\frac {1862}{1125}} \,{x}^{5} +{\frac {189937}{90000}}\,
{x}^{4}-{\frac {1031}{450}} \,{x}^{3}+{\frac {179249}{90000}}\,
{x}^{2}-{\frac {94}{75}}\, x+{\frac {2414}{5625}},
\end{align*}
and it satisfies $$g(x)>G(x)>0, \ \text{for $x\in[0,1]$}.$$ As
before, the strict positivity of $G$ on $[0,1]$ is  proved observing
that $G(0)>0$, and that the polynomial $G$ has no real roots
 in that interval;  in fact $G$ has exactly two real  roots and the smallest
 one is in $(11/10,12/10).$

Notice that the trick of replacing $\sqrt{2\pi}$ by upper or lower
rational bounds is  crucial  because it allows the use of the
aforementioned approach.

\bigskip

\begin{figure}[h]
\centering
\includegraphics[width=8cm]{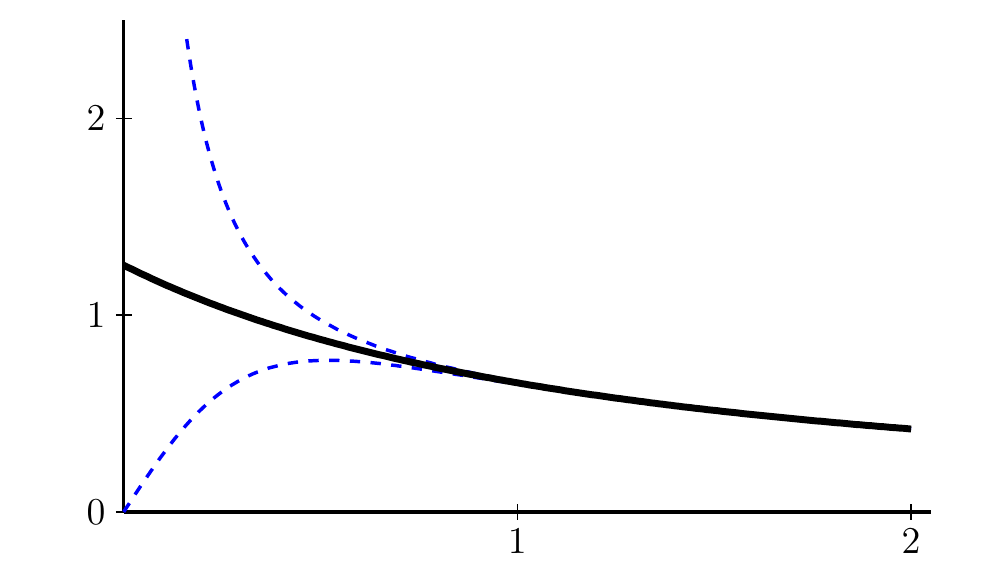}
\caption{Solid line: function $f$.  Upper and lower dashed lines:
rational functions $Q_{11}/P_{11}$ and $Q_{10}/P_{10}$ respectively.} \label{fig3}
\end{figure}

\noindent{\bf Step 2. $x>1$.}
 By Proposition \ref{propmain} we approximate $f$ by
${Q_{10}(x)}/{P_{10}(x)}<f(x)<{Q_{11}(x)}/{P_{11}(x)},$
 where
\begin{align*}
\frac{Q_{10}(x)}{P_{10}(x)}&={\frac {{x}^{9}+44 {x}^{7}+588
{x}^{5}+2640 {x}^{3}+2895 x}{{x}^{ 10}+45 {x}^{8}+630 {x}^{6}+3150
{x}^{4}+4725 {x}^{2}+945}},
\\[1.mm]
\frac{Q_{11}(x)}{P_{11}(x)}&={\frac {{x}^{10}+54 {x}^{8}+938
{x}^{6}+6090 {x}^{4}+12645 {x}^{2} +3840}{{x}^{11}+55 {x}^{9}+990
{x}^{7}+6930 {x}^{5}+17325 {x}^{3}+ 10395 x}} ,
\end{align*}
see Figure~\ref{fig3}. Then in $g$  change $f$ by $Q_{10}/P_{10}$ in
the terms with positive signs, and by $Q_{11}/P_{11}$ in the terms
with negative sign. We obtain a rational function with positive
denominator,  and numerator
\begin{align*}
N(x)= &\, {x}^{36}+185 {x}^{34}+15388 {x}^{32}+761580
{x}^{30}+25019940 {x}^ {28}+576522420 {x}^{26}\\&+9601604100
{x}^{24}+117398708820 {x}^{22}+ 1059855272550 {x}^{20}+7043405005350
{x}^{18}\\&+33995881448100 {x}^{ 16}+115607852356500
{x}^{14}+259703297525700 {x}^{12}\\&+ 329529066520500
{x}^{10}+108511796893500 {x}^{8}-233411033740500 {x
}^{6}\\&-247669566519375 {x}^{4}-66176702274375
{x}^{2}-6584094720000.
\end{align*}

So it suffices to prove that $N(x)>0$ for $x>1$. This again is  a
consequence of the  Sturm  Theorem, which implies that  $N$ has no
real roots in $(1,\infty)$ since  its biggest root is in the
interval $(93/100 ,94/100)$, and $N(1)>0$. \end{proof}

\subsection{A second application: Bounds involving square roots}
\label{Sub:square}
As a second application we study lower and upper bounds of $f$ of the form
\begin{equation}
\label{h}
\psi_{a,b,c}(x)=\frac{a}{\sqrt{x^2+b}+cx}, \ \text{with $a,\, b,\, c>0$}.
\end{equation}
The asymptotic expansion at infinity of such a function is
\begin{equation}
\label{asymptoticPsi}
\psi_{a,b,c}(x)\sim \frac{\gamma_1}{x}+\frac{\gamma_3}{x^3}+\frac{\gamma_5}{x^5}+\cdots, \ x\to\infty,
\end{equation}
for $\gamma_i=\gamma_i(a,b,c), \ i=1,3,\dots$, which is of  the same type of the expansion of $f$ (see (\ref{asy_feller})).
Well known bounds
of this class are, for $x\ge 0$,
\begin{equation}
\label{FitaSampford}
\frac{2}{\sqrt{x^2+4}+x}<f(x)<\frac{4}{\sqrt{x^2+8}+3x}.
\end{equation}
The lower bound was proved by Birnbaum \cite{Bir42};
it is also   given in Itô and McKean \cite[Page 17]{ItoMc74},
jointly with a worst  upper bound, and
  attributed to Y. Komatu
(1955) (see the reference therein).
The  upper bound is equivalent to the strict convexity of $f$ (see the proof
of Theorem \ref{main}), and, as we commented,
it was proved   by Sampford \cite{Sam}
and Shenton \cite{se}.

However, these bounds have at 0 different values than $f(0)$,
and then  Boyd \cite{Boyd59} (reproduced
in Mitrinovi\'{c} \cite[p. 179]{Mitri70}; see also Amos
\cite[Inequalities (12)]{Amos73}) gives the following new bounds, for $x>0$,
$$\frac{\pi}{\sqrt{x^2+2\pi}+(\pi-1)x}<f(x)<\frac{\pi}{\sqrt{(\pi-2)^2x^2+2\pi}+2x}.$$
These bounds, denoted by $\ul \psi$ and $\ol \psi$ respectively,  satisfy that
$$\ul\psi(0)=\ol\psi(0)=f(0) \quad \text{and} \quad \lim_{x\to\infty}x\big(\ul \psi(x)
-f(x)\big)=\lim_{x\to\infty}x\big(\ol \psi(x)-f(x)\big)
=0,$$
and they are the sharpest bounds of the form $\psi_{a,b,c}$ that satisfy the above conditions.

\bigskip

To systematize and compare these bounds we introduce some notation: We will consider a generic
function $g$ that is enough regular at 0 an that have an asymptotic expansion at infinity as the one given in
(\ref{asymptoticPsi}).  We say the such a
function $g$
 is  equal to $f$ at 0 of order $i\ge 1$ if
$g$ and $f$, and its derivatives up to order $i-1$ coincide at 0, that is,
$$g^{(k)}(0)=f^{(k)}(0), \text{for $k=0,\dots,i-1$}.$$
We say that $g$ and $f$  are equal at 0 of order 0 if there is no
condition of the values of $g$ and $h$ (and its derivatives) at 0.
In a similar way, we say that $g$ and $f$ are  equal at infinity of order $j\ge 1$
 if
 $$\lim_{x\to \infty}x^k\,(g(x)-f(x)\big)=0, \text{for $k=0,\dots,2j-1$},$$
 and that they are equal   of order 0 if there is no restriction on the behaviour
 at infinity. Finally, for $i,j\ge 0$, we say that  $g$ and $f$ are equal
   of order $(i,j)$ if they are equal at 0 of order $i$ and at infinity at order $j$.

 Particularizing these notations to our bounds, since $\psi_{a,b,c}$
has three free parameters, we introduce four functions to study the different
possibilities of equality
between $\psi_{a,b,c}$ and $f$. For $i,j\in \{0,1,2,3\}$ such
that $i+j=3$, we denote by $W_{i,j}$ the function $\psi_{a,b,c}$ which is equal to
$f$ of order $(i,j)$.
The four functions are
$$W_{3,0}(x)=\frac{\pi}{\sqrt{2x^2(4-\pi)+2\pi}+2x},
\quad  \, W_{1,2}(x)=\frac{\pi}{\sqrt{x^2+2\pi}+(\pi-1)x},
$$
 $$W_{2,1}(x)=\frac{\pi}{\sqrt{(\pi-2)^2x^2+2\pi}+2x}\quad \text{and}\quad W_{0,3}(x)=\frac{4}{\sqrt{x^2+8}+3x}.$$
 Note that $W_{1,2}=\ul \psi$ and $W_{2,1}=\ol \psi$,
and $W_{0,3}$ coincides with the upper bound in (\ref{FitaSampford}).
$W_{3,0}$ seems to be new. For the deduction of these functions see the proof of point 2
in the next
theorem:

\begin{theorem}

\label{ThW}
\rule[0mm]{0mm}{0mm}

\begin{enumerate}[\bf 1.]
\item For $x>0$,
$$\max\big\{W_{3,0}(x),\, W_{1,2}(x)\big\}<f(x)<\min\big\{W_{0,3}(x),\, W_{2,1}(x)\big\}.$$

\item For $(i,j)=(0,2)$ or $(i,j)=(2,0)$,  the functions
$W_{i+1,j}$ and $W_{i,j+1}$  are the sharpest lower and upper bound of $f$
of the form $\psi_{a,b,c}$ such that are equal to $f$ of order $(i,j)$.
Moreover,  the functions  $W_{1,2}$ and $W_{2,1}$
  are the sharpest lower and upper bound of $f$
of the form $\psi_{a,b,c}$ such that are equal to $f$ of order $(1,1)$

\item Between the possible combination of upper and lower bounds,
 $W_{2,1}$ and $W_{3,0}$ are optimal in the sense that
  $\max_{x\ge 0}\big(W_{2,1}(x)-W_{3,0}(x)\big)$
 is minimal. In particular,
 $$\max_{x\ge 0}\Big(W_{2,1}(x)-W_{3,0}(x)\Big)<0.015.$$

\end{enumerate}

\end{theorem}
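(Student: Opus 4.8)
The plan is to prove the three parts in turn, using throughout the reduce-to-polynomial-positivity methodology of Section~\ref{se:3}. For Part~1 I would establish each of the four one-sided inequalities by the same device. Writing $W=\psi_{a,b,c}$, the inequality $f(x)<\psi_{a,b,c}(x)$ is equivalent to $a/f(x)-cx>\sqrt{x^2+b}$; since $f(x)<1/x<a/(cx)$ on $(0,\infty)$ for all four parameter choices, the isolated expression $a/f(x)-cx$ is positive, so I may square and then multiply by $f^2(x)$, obtaining an inequality of the shape $A(x)f^2(x)+B(x)f(x)+C(x)>0$ with $A,B,C$ polynomials whose coefficients lie in $\mathbb{Q}[\pi]$; the lower bounds $\psi_{a,b,c}(x)<f(x)$ reduce similarly to a reversed inequality. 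Replacing $f(x)$ by the two-sided rational bounds of Proposition~\ref{propmain}, chosen according to the sign of each coefficient, and replacing $\pi$ by rational bounds in the appropriate direction turns each problem into the positivity of a polynomial with rational coefficients, which Sturm's theorem certifies; as in Theorem~\ref{main2} I would split $[0,\infty)$ into $[0,1]$ (sharp Taylor bounds $T_k$) and $[1,\infty)$ (convergents $Q_n/P_n$). Three of the bounds are classical and could simply be cited; the new content is $W_{3,0}<f$, handled identically.

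For Part~2 I would use that equality with $f$ of order $(i,j)$, $i+j=2$, leaves a single free parameter, which I take to be $a$. From $\gamma_1=a/(1+c)$, $\gamma_3=-b/(2a)$ and $\psi(0)=a/\sqrt b$, $\psi'(0)=-ac/b$, $\psi''(0)=a(2c^2-1)/b^{3/2}$, each order condition becomes an explicit algebraic relation, so every family is a one-parameter curve $a\mapsto\psi^{(a)}$ containing the two relevant $W$'s. The decisive step is monotonicity in $a$: in each family the quantity $D-a\,\partial_a D$ (with $D$ the denominator of $\psi$) collapses, after cancellation of the terms linear in $x$, to a manifestly single-signed expression of the form $(\,\cdot\,)/\sqrt{x^2+\cdots}\ \mp x$, so $\psi^{(a)}$ is strictly monotone in $a$ (increasing for the orders $(0,2)$ and $(2,0)$, decreasing for $(1,1)$). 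I then read off, from the first non-matched term at $0$ and at $\infty$, the two threshold values of $a$ at which $\psi^{(a)}$ gains an extra order of contact (one at the origin, one at infinity); these are exactly the parameters of the two $W$'s of the statement. The more restrictive of the two thresholds, together with the monotonicity and the validity supplied by Part~1, singles out the sharpest lower and upper bound. For instance, in the order-$(1,1)$ family $\psi^{(a)}$ is decreasing, the lower-bound constraint binds at infinity (forcing $a\ge\pi$, i.e.\ $W_{1,2}$) and the upper-bound constraint binds at $0$ (forcing $a\le\pi/(\pi-2)$, i.e.\ $W_{2,1}$).

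For Part~3 the explicit bound $\max_{x\ge0}(W_{2,1}-W_{3,0})<3/200$ is again reduced to polynomial positivity. Using the identity
\[
W_{2,1}(x)-W_{3,0}(x)=\pi\,\frac{(4+2\pi-\pi^2)\,x^2}{(s_1+s_2)(s_1+2x)(s_2+2x)},\qquad s_1=\sqrt{(\pi-2)^2x^2+2\pi},\ \ s_2=\sqrt{(8-2\pi)x^2+2\pi},
\]
(which also shows the difference is positive, since $4+2\pi-\pi^2>0$), I isolate and square to remove the three occurring surds $s_1$, $s_2$, $s_1s_2$, rationalize $\pi$, and apply Sturm on $[0,\infty)$. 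For optimality among the four lower/upper combinations, I first observe that the two combinations built on $W_{0,3}$ are beaten trivially: $W_{0,3}(0)-W_{3,0}(0)=W_{0,3}(0)-W_{1,2}(0)=\sqrt2-\sqrt{\pi/2}>0.16$, which already exceeds the global maximum of $W_{2,1}-W_{3,0}$. Hence only the comparison with $W_{2,1}-W_{1,2}$ remains.

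That last comparison is the main obstacle. The two maxima are numerically very close (about $0.0131$ versus $0.0136$), and $W_{3,0}$ and $W_{1,2}$ cross ($W_{3,0}>W_{1,2}$ near $0$ and $W_{3,0}<W_{1,2}$ near $\infty$), so there is no pointwise domination to fall back on. I would settle it by a two-sided finite certificate: produce a rational $\kappa$ (e.g.\ $\kappa=133/10000$) with $\max_{x\ge0}(W_{2,1}-W_{3,0})\le\kappa$ via the Sturm argument above, and exhibit a rational point $x_0$ (e.g.\ $x_0=3/5$) at which $W_{2,1}(x_0)-W_{1,2}(x_0)>\kappa$, the latter evaluated with rational under/over-estimates of $\pi$ and of the surds. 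This yields $\max(W_{2,1}-W_{1,2})>\kappa\ge\max(W_{2,1}-W_{3,0})$, completing the optimality. Because the two maxima are separated by only a few thousandths, the certificates must be correspondingly sharp, and the bulk of the computational care will go there.
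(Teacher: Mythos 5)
Your Parts 1 and 2 follow essentially the same route as the paper. In Part 1 the paper likewise reduces each inequality to the positivity of an expression of the form $A(x)f^2(x)+B(x)f(x)+C(x)$ (e.g.\ $W_{3,0}<f$ becomes $4x^2f^2-2\pi x^2f^2+2\pi f^2+4\pi xf-\pi^2>0$), substitutes $T_7/T_8$ on $(0,1]$ and $Q_{12}/P_{12}$, $Q_{13}/P_{13}$ on $(1,\infty)$ according to signs, replaces $\pi$ and $\sqrt{\pi/2}$ by convergents, and invokes Sturm; your preliminary check that $a/f(x)-cx>0$ (via $f(x)<1/x$ and $a/c>1$ for all four parameter sets) correctly legitimizes the squaring. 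In Part 2 the paper uses the identical one-parameter-family argument (parametrized by $c$ or by $a$, immaterial): monotonicity of $\psi$ in the parameter, plus the signs of $\lim_{x\to0}\bigl(f(x)-\psi(x)\bigr)/x^2$ and $\lim_{x\to\infty}x^k\bigl(f(x)-\psi(x)\bigr)$ to show that parameters strictly between the two thresholds give neither a lower nor an upper bound; your identification of which constraint binds where in the $(1,1)$ family agrees with the paper's.

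The substantive divergence is in Part 3, and there your version is actually the sounder one. For $\max(W_{2,1}-W_{3,0})<0.015$ the paper analyzes $W'$, traps the maximizer in $(198/100,199/100)$ via a degree-$12$ polynomial and Descartes/Bolzano, and uses monotonicity of both bounds to evaluate; your direct surd-clearing certificate for $W_{2,1}-W_{3,0}-\kappa<0$ is a workable alternative (your displayed identity, with $4+2\pi-\pi^2>0$, is correct). For the optimality among the four upper/lower combinations, however, the paper merely asserts the existence of points $y_1,y_2,y_3$ at which the other three differences exceed $0.015$. This is immediate at $y=0$ for the two combinations involving $W_{0,3}$ (gap $\sqrt2-\sqrt{\pi/2}\approx0.161$), but it is \emph{false} for $W_{2,1}-W_{1,2}$: its maximum is approximately $0.0137$, attained near $x\approx0.57$, which lies below $0.015$. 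So the paper's stated argument does not close this case, whereas your two-sided certificate --- a rational $\kappa$ (such as $133/10^4$) with $\max_{x\ge0}\bigl(W_{2,1}(x)-W_{3,0}(x)\bigr)\le\kappa$ together with a rational point $x_0$ at which $W_{2,1}(x_0)-W_{1,2}(x_0)>\kappa$ --- is exactly what is needed, and your numerics ($\approx0.0131$ versus $\approx0.0137$) check out. Provided the Sturm computations are carried through, your proposal is correct and, on this last point, more careful than the paper's own proof.
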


\begin{proof}

\rule[0mm]{0mm}{0mm}

\noindent{\bf 1.} We always assume $x>0$. This part refers to four inequalities. As we
already commented,
 $f(x)<W_{0,3}(x)$
is equivalent to
$$2+x^2f^2(x)-f^2(x)-3x f(x)>0$$
and we proved it in Theorem \ref{main}. We now prove $W_{3,0}(x)<f(x)$, which is new,
and we omit the proofs of the other two inequalities, that are very similar
(and, indeed, these bounds are known).

The inequality $W_{3,0}(x)<f(x)$ is equivalent to
\begin{equation}
\label{primerasubs}
4x^2f^2(x)-2\pi x^2 f^2(x)+2\pi f^2(x)+4\pi xf(x)-\pi^2>0.
\end{equation}
The main difficulty in this proof is the apparition of $\pi$ and $\pi^2$ in the above
inequality, and it is not convenient to change these number by rational approximations
till the last moment. The proof has two steps:

\smallskip

\noindent{\it First step: $x\in (0,1]$}. We will use the Taylor polynomials
$T_7$ and $T_8$ as in the proof of Theorem \ref{main}. Specifically,
 in the left hand side of (\ref{primerasubs}) change $f$ by $T_7$ in positive
 terms and by $T_8$ in the negative terms. We get a polynomial of degree 18 where
 some coefficients are multiplied by $\sqrt{\pi/2}, \, \sqrt{2/\pi},\, \pi$ or $1/\pi$. Now in the
 terms with $\sqrt{\pi/2}, \, \sqrt{2/\pi}$  change them by fractions
 (in agreement with the sign)
 by  using the convergents of $\sqrt{\pi/2}$ given by
 \begin{equation}
 \label{convarrelpi}
 \frac{851}{679}<\sqrt{\frac{\pi}{2}}<\frac{94}{75},
 \end{equation}
 and finally,
 change $\pi$ and $1/\pi$ by  using
\begin{equation}
\label{convPi}
\frac{333}{106}<\pi<\frac{355}{113}.
\end{equation}
 We get  a polynomial with rational coefficients that by Sturm Theorem is strictly positive in $(0,1]$.

\smallskip

\noindent{\it Second step: $x>1$}.
In the expression of the left hand side of (\ref{primerasubs}), first substitute $f$ in the positive terms
by $Q_{12}/P_{12}$ and in the negative terms by $Q_{13}/P_{13}$. This gives a rational function
with positive denominator. The numerator is a  polynomial
of order 48 with integer coefficients  some of them multiplied by $\pi$ or $\pi^2$.
In that expression, use the convergents of $\pi$ given in (\ref{convPi}).
 We arrive to a polynomial with rational
coefficients that, by Sturm Theorem, is strictly positive for $x>1.$

\bigskip

\noindent{\bf 2.} We prove that $W_{3,0}$ and $W_{2,1}$ are the sharpest upper
and lower bounds of $f$
of the form $\psi_{a,b,c}$ such that are equal to $f$ of order $(2,0)$.
Later, we comment the proofs of the other cases.

First we consider the family of functions $\psi_{a,b,c}$ such that are equal to $f$ of order $(2,0)$.
 From $f(0)=\sqrt{\pi/2}$ and $f'(0)=-1$ we deduce that $a=\pi c/2$ and $b=\pi c^2/2$.
Hence we can parametrize that family by
$$\psi_{c}(x)=\frac{\pi c/2}{\sqrt{x^2+\pi c^2/2}+cx},\ c>0.$$
Fixed $x>0$, by derivation with respect to $c$, it is proven that the function
$c\in(0,\infty)\to \psi_c(x)$ is strictly increasing. Hence, for the optimal lower bound we should take
$c$ as large as possible satisfying $\psi_c(x)<f(x)$, and for the optimal upper bound,
$c$ as small as possible with $f(x)<\psi_c(x)$.

Imposing that $\psi_c$ is equal to $f$ of order $(3,0)$, that means,
$\psi_c''(0)=f''(0)=\sqrt{\pi/2},$ we deduce that $c$ should be
$c_0=\sqrt{2/(4-\pi)}$. Note that $\psi_{c_0}=W_{3,0}$.
On the other hand, to look $c$ such that  $\psi_c$ is equal to $f$ of order
(2,1),   consider the asymptotic expansion of $\psi_c$ when
$x\to \infty$,
$$\psi_c(x)\sim \frac{\pi c}{2+2c}\,\frac{1}{x}+\cdots.$$
Comparing with the asymptotic expansion of $f$ (see (\ref{asy_feller}))
we should take $c_1=2/(\pi-2)$. Then $\psi_{c_1}=W_{2,1}$.
Note that $c_0<c_1$.

By  Taylor formula for
$\psi_c$ it is deduced that
$$\lim_{x\to 0}\frac{f(x)-\psi_c(x)}{x^2}=
\frac{(\pi-4)c^2+2}{2\sqrt{2\pi}c^2}.$$
Hence, if $c>c_0$, that limit is negative, and then, for $x$ near 0, $f(x)<\psi_c(x)$.
So $\psi_c$ is not a lower bond.

Now, from the asymptotic expansion when $x\to\infty$ of $\psi_c$ and $f$ it follows that
$$\lim_{x\to\infty}x\big(f(x)-\psi_c(x)\big)=\frac{(2-\pi) c+2}{2+2c}.$$
Then, if $c<c_1,$ we have that for $x$ large enough,  $f(x)>\psi_c(x)$, so $\psi_c$
is not an upper bound.

To summarize, by point {\bf 1}, $\psi_{c_0}(x)<f(x)<\psi_{c_1}(x)$, and  for $c\in (c_0,c_1)$, $\psi_c$ is neither a lower bound
nor an upper bound, Then
the proof is complete.

For the case $W_{1,2}$ and $W_{1,3}$, the functions $\psi_{a,b,c}$ such that
has a coincidence of order (1,1) with $f$ can be parametrized as
\begin{equation}
\label{eta}
\eta_a(x)= \frac{a}{\sqrt{x^2+2a^2/\pi}+(a-1)x}, \ a>1.
\end{equation}
The function $W_{1,2}$ is the case $a_0=\pi$, and $W_{2,1}$ is
$a_1=\pi/(\pi-2)$. Fixed $x>0$, The function $a\to \eta_a(x)$ is strictly decreasing, and
for $a\in\big(\pi/(2-\pi),\pi\big)$, the function $\eta_a$ is  neither a lower bound
nor an upper bound.

Finally, consider the case $W_{1,2}$ and $W_{0,3}$. The functions $\psi_{a,b,c}$ such that
has a coincidence of order (0,2) with $f$ can be parametrized as
\begin{equation}
\label{chi}
\chi_c(x)=\frac{1+c}{\sqrt{x^2+2(1+c)}+cx}.
\end{equation}
The function $\chi_c$ with a coincidence of order (1,2) with $f$ corresponds to
$c=\pi-1$, and it is $W_{1,2}$. The function $\chi_c$ with a coincidence of
order (0,3) with $f$ corresponds to $c=3$ and it is $W_{0,3}$. Using a similar
argument it is proved that for $c\in (\pi-1,3)$, $\psi_c$ is  neither a lower bound
nor an upper bound.

\bigskip

\noindent{\bf 3.} We first will prove that  $\sup_{x\ge 0}\big(W_{2,1}(x)-W_{3,0}(x)\big)<0.015.$
To this end, write $W(x)=W_{2,1}(x)-W_{3,0}(x)$,
$$W_{2,1}(x)=\frac{\pi}{\sqrt{(\pi-2)^2x^2+2\pi}+2x}=\frac{\pi}{\Delta_0(x)+2x},$$
and
$$W_{3,0}(x)=\frac{\pi}{\sqrt{2x^2(4-\pi)+2\pi}+2x}=\frac{\pi}{\Delta_1(x)+2x},$$
A computation shows that
$$W'(x)=\frac{\pi x M(x)}{\Delta_0(x)\Delta_1(x)\big(\Delta_0(x)+2x\big)^2\big(\Delta_1(x)+2x\big)^2},$$
where $M(x)$ is a polynomial. So the zeroes of $M(x)$ will give the behaviour of $W$.
Squaring conveniently the equation $M(x)=0$, which includes radicals,
 it can be translated to a polynomial equation and we have that if $x_0$ is a zero of
 $M$ then it is also a zero of a certain polynomial
say, $R_{12}$ (the reciprocal is not true); that polynomial has the form
$$R_{12}(x)=a_6 x^{12}+a_5x^{10}+a_4x^8+a_3x^6+a_2x^4+a_1x^2+a_0,$$
where
$a_6,a_5,a_4,a_3,a_0>0$ and $a_2,a_1<0$. By Descartes Theorem, $R_{12}$ will have two positive zeroes
or none. Moreover, $R_{12}(0)>0,\ R_{12}(17/10)<0,\, R_{12}(\infty)>0$, and by Bolzano
theorem $R_{12}$ has exactly two real zeroes, say $x_1$ and $x_2$. Moreover,
$$R_{12}(15/10)R_{12}(16/10)<0 \quad \text{and}\quad R_{12}(198/100)R_{12}(199/100)<0,$$
and
$$M(1)>0,  \ M(17/10)>0 \quad \text{and}\quad M(2)<0.$$
This implies that $x_1$ is not a zero of $M$ ($x_1$ is a spurious zero introduced
by the procedure to cancel the radicals). To summarize, $W$ has a maximum at $x_2\in
I:=(198/100,199/100).$ Finally, both $W_{2,1}$ and $W_{3,0}$ are strictly decreasing,
so
$$\max_{x\ge 0}\Big(W_{2,1}(x)-W_{3,0}(x)\Big)=
\max_{x\in I}\Big( W_{2,1}(x)-W_{3,0}(x)\Big)\le W_{2,1}(198/100)-W_{3,0}(199/100)<0.015.$$

\medskip

Now it is easy to check that there are $y_1,y_2,y_3\ge 0$ such that
$W_{2,1}(y_1)-W_{1,2}(y_1)>0.015$, $W_{0,3}(y_2)-W_{1,2}(y_2)>0.015$
and $W_{0,3}(y_3)-W_{3.0}(y_3)>0.015$.

\end{proof}

\begin{remarks}

\rule[0mm]{0mm}{0mm}

\begin{enumerate}[\bf 1.]
\item Note that from the expression (\ref{eta}) and the  properties of the function
$\eta_a$,
taking $a=2$ we get  a nice and simple  upper bound of $f$.
Specifically,
$$\eta_2(x)=\frac{2}{\sqrt{x^2+8/\pi}+x}.$$
This bound was found by Pollak \cite{Poll56} (see also
 Mitrinovi\'{c} \cite[p. 179]{Mitri70}).

\item In a similar way, from the expression (\ref{chi})
a lower bound for $f$, less sharp than $W_{1,2}$,
but more friendly, is given by taking $c=2$, which gives
$$\chi_2(x)=\frac{3}{\sqrt{x^2+6}+2x}.$$

\item The three parameters of $\psi_{a,b,c}$ allow to build alternative bounds adapted
to more specific purposes. For example, we construct   lower and upper bounds
  that approximate sharply  $f$ in a neighborhood of $x=2$; they are based in a function with
  a coincidence with $f$ of order (1,1) and later manipulating the coefficients
  in order that
$a,b,c$ to be rational numbers: for $x>0$,
$$\frac{200}{\sqrt{1521 x^2+25600}+161x}<f(x)<\frac{192}{\sqrt{4225x^2+20736}+127x}.$$
The proof is analogous to the proofs of the   bounds in Theorem \ref{ThW}
\end{enumerate}
\end{remarks}

\subsection{More bounds with rational  or exponential functions}
The topic of searching new bounds for the Mills ratio seems inexhaustible, and
we would like to mention the papers of Dümbgen \cite{Dum10} and
Avram \cite{Avram13}  where new methodologies
are presented. To finish we
study five families of  bounds  and  we just  emphatize that  all inequalities
are proved by the same method.

\bigskip

\noindent{\bf 1. Padé approximations at the origin.} In Corollary \ref{coromain}
we proved that the rational fractions $Q_n/P_n$ are the Padé--Laurent approximations
at infinity. Now we will consider  Padé approximations at the origin.
Standard computations show that (with the usual notations for Padé aproximations)
\begin{equation*}
p_{0,1}(x)=\frac{\pi}{\sqrt{2\pi}+2x}
\quad\text{and}\quad p_{1,2}(x)= \frac{6\pi\sqrt{2\pi}-24\sqrt{2\pi}
+(48-16\pi)x}{12\pi-48-4\sqrt{2\pi}x+2(8-3\pi)x^2}.
\end{equation*}
Using our procedure, it is proven that for $x>0$,
$$p_{1,2}(x)<f(x)<p_{0,1}(x).$$
It is worth to comment that $p_{0,1}(x)$ is very simple and it is a
 good global approximation to $f$.
Actually, $p_{0,1}(x)-p_{1,2}(x)<0.08$
\bigskip

\noindent{\bf 2. Bounds with simple rational functions.}
We construct bounds of the type $a/(b+x)$. Note  that
such  functions have an expansion at infinity with nonzero terms in the even
coefficients, so we need to change slightly our notations
 of Section \ref{Sub:square}; here and in  next points,
 we say that $g$ is equal to $f$ at infinity of order $j\ge 1$
 if $\lim_{x\to\infty}x^k\big(g(x)-f(x)\big)=0$ for $k=0,\dots,j$
 (the notation for the equality at 0 does need to be modified).
We consider only the most interesting cases. Let
$U_{2,0}$ equal to $f$ of order (2,0) and $U_{1,1}$ equal
of order (1,1). They are
$$U_{2,0}(x)=\frac{\pi}{\sqrt{2\pi}+2x}\quad \text{and}\quad
U_{1,1}(x)= \frac{\pi}{\sqrt{2\pi}+\pi x}.$$
Note that $U_{2,0}=p_{0,1}$ of previous point.
We consider also approximations to both function with rational coefficients.
It is proved that for $x>0$,
$$\frac{105}{91+110x}<U_{1,1}(x)<f(x)<U_{2,0}(x)<\frac{44}{35+28x}.$$
We have that $U_{2,0}(x)-U_{1,1}(x)<0.15$, and the difference
between the corresponding bounds with rational coefficients is lower than 0.19.

\bigskip

\noindent{\bf 3. Bounds with quadratic rational functions.} Following an idea
of Bryc \cite{Bryc02} we consider
 functions of the form
$V(x)=(a+bx)/(c+dx+x^2).$
 We study   some of the functions $V_{i,j}$ with $i+j=4$  such
that are equal  to $f$ of order $(i,j)$. First we consider  the functions  $V_{2,2}$ and
$V_{1,3}$, and some corresponding  functions with simple
rational coefficients. We have
\begin{align*}
 \frac {35+15x}{28+37x+16x^2}<V_{2,2}(x)=&
\frac{ \sqrt{2\pi }+(\pi -2) x}{2+x \sqrt{2\pi}+  ( \pi -2 ) x^2}<f(x)\\
&\qquad \qquad<V_{1,3}(x)=\frac{\sqrt{2\pi}+2x}{2+x \sqrt{2\pi} +2x ^2}
<\frac{2}{5}\,\frac{13+10x}{4+5x+4{x}^{2}}.
\end{align*}
The bounds are quite good: $V_{1,3}(x)-V_{2,2}(x)<0.07$, and the difference
between the corresponding bounds with rational coefficients is lower than 0.13.

The function $V_{3,1}$ is also interesting. Its expression is
$$V_{3,1}(x)=\frac{\sqrt{2\pi}(\pi-2)+(4-\pi)x}{2(\pi-2)+\sqrt{2\pi}x+(4-\pi)x^2}.$$
It is given in Bryc \cite{Bryc02} (see also  Avram \cite{Avram13}) as a good uniform
 approximation to $f$
 without a formal
proof. Following our procedure it is proven that it is an upper bound of $f$,
and indeed better than $V_{1,3}$ since $V_{3,1}(x)-V_{2,2}(x)<0.015$.

\bigskip

\noindent{\bf 4. Bounds involving one exponential term.}
Inspired by Karagiannidis and Lioumppas \cite{KaraLio07}
 we consider upper and lower bounds of $f$ of the form
$$\kappa_{a,b}(x)=\frac{1-e^{-ax}}{bx},$$
for $a,b>0$.
In Karagiannidis and Lioumpas \cite{KaraLio07} the values $\bar a=1.98/\sqrt 2$ and $\bar b=1.135$ are proposed
to get  a good approximation to $f$ based on numerical arguments. Following the notations
introduced at point {\bf 2} we consider the functions   $Z_{2,0}$ and $Z_{1,1}$
given by
$$Z_{2,0}(x)=\frac{1-e^{-4x/\sqrt{2\pi}}}{4x/\pi} \quad \text{and}\quad Z_{1,1}(x)=\frac{1-e^{-\sqrt{2\pi}\,x/2}}{x}.$$
Combining the approximations of $f$ with the properties of a function of the form
$1-{\rm exp}(-ax)$,
the study of inequalities between $f$ and $\kappa_{a,b}$ are reduced to prove the strict
positivity of certain polynomials with rational coefficients, as in the previous sections.
It is proved that for $x>0$,
$$Z_{2,0}(x)<f(x)<Z_{1,1}(x).$$
These bounds are quite good since $Z_{1,1}(x)-Z_{2,0}(x)<0.1$.
Moreover, following the same arguments as in  Theorem \ref{ThW} it is proved
that $Z_{2,0}$ and $Z_{1,1}$ are the sharpest lower and upper bound of the
form $\kappa_{a,b}$ such that $\lim_{x\to 0}\kappa_{a,b}(x)=f(0).$
The function $\kappa_{\bar a,\bar b}$ proposed by Karagiannidis and Lioumpas \cite{KaraLio07}
is neither a lower bound nor an upper bound.

\bigskip

\noindent{\bf 5.  Chernoff type bounds for the Gaussian $Q$-function.}
In the engineering literature
the function
$$Q(x)=\frac{1}{\sqrt{2\pi}}\int_x^\infty e^{-t^2/2}\, dt$$
is called the Gaussian $Q$--function. By technical reasons (see Chang {\it et al.}
\cite{Chang11}
and Côté {\it et al.} \cite{Cote12} and  the references therein),
Chernoff type bounds for $Q$ of the form
$$C_{a,b}(x)=a\, e^{-bx^2}, \ x\ge 0,$$
for $a,\, b\ge 0$, are particulary convenient in the analysis of communications systems. Since
$$Q(x)=\frac{1}{\sqrt{2\pi}}\, e^{-x^2/2}f(x),$$
Chernoff type bounds for Mills ratio  give automatically Chernoff type bounds
for $Q$. In particular,  for $x> 0$,
$$e^{-3x^2/5}<f(x)<\sqrt{\frac{\pi}{2}}.$$
Hence,
$$\frac{1}{\sqrt{2\pi}} \, e^{-11x^2/10}<Q(x)<\frac{1}{2}\, e^{-x^2/2}.$$
The  upper bound follows from the fact that $f$ is strictly decreasing
and $f(0)=\sqrt{\pi/2}$; in agreement with Chang {\it et al.}
\cite[Corollary 1]{Chang11}  it is the  optimal Chernoff type upper bound
for $Q$. The lower bound is proved applying our procedure.

\section*{Appendix: Sturm  Theorem}

We present a general version of Sturm Theorem \cite{s} (see
Isaacson and Keller \cite[Page 126]{Isaac94}).
 Let $f:[a,b] \to \R$ be a differentiable function. A sequence
 of continuous functions
on $[a, b]$, $\{f_0,f_1, . . . , f_m\}$, with $f_0=f$,    is called
a Sturm  sequence for $f$ on $[a, b]$ if the following holds:

\begin{enumerate}

\item  $f$ has at most simple roots in $[a,b]$.

\item  $f_m$ does not vanish in $[a, b].$

\item If $f (x_0) = 0$ for some $x_0\in[a, b]$ then $f_1(x_0)f_0'(x_0) >
0.$

\item If for some $i>0,$ $f_i(x_0) = 0$ with $x_0\in[a, b]$ then $f_{i+1}(x_0)f_{i-1}(x_0) <
0.$

\end{enumerate}

\noindent{\bf Sturm  Theorem.} {\it Let $\{f_0,f_1, . . . , f_m\}$
be a Sturm  sequence for $f = f_0$ on $[a, b]$ with $f (a)f
(b)\ne0.$ Then the number of solutions of $f=0$ on $(a, b)$ is equal
to $V (a)-V (b),$ where $V (c)$ is the number of changes of sign in
the sequence $[f_0(c), f_1(c),\ldots, f_m(c)]$.}

\medskip

We remark that if in  $[f_0(c), f_1(c),\ldots, f_m(c)]$ some value
$f_j(c)$ vanishes, then it does not contribute to the number of
changes of sign of the sequence.

When $f$ is a polynomial, Sturm approach enjoys the very useful
properties that it can be applied even without knowing a priori if
the polynomial has simple roots, and that  there is a simple
procedure to construct its Sturm  sequence.  Indeed, if $p$ is a
polynomial of degree $n$ then  define $\{p_0,p_1, . . . , p_m\}$
with $m\le n$ setting $p_0 = p,$ $p_1 = p'$  and
\begin{align*}
p_{i-1}(x) &= q_i(x)p_i(x)-p_{i+1}(x),\quad \mbox{for}\quad i =
1,2,\ldots, 2, \ldots,m-1,\\
 p_{m-1}(x) &= q_m(x)p_m(x),
 \end{align*} where $q_i$ and
$p_{i+1}$ are respectively the quotient and the remainder (the
latter with the sign changed) of the division of $p_{i-1}$ by $p_i$.
The construction of this sequence ends when the remainder is zero,
i.e., $p_{m+1} = 0.$ In this case, since this procedure is
essentially the Euclides algorithm, $p_m$ is the greatest common
divisor of $p_0$ and $p_1$. When all the zeros of $p$ are simple
then $m=n$ and  $p_m$ is a nonzero constant. Then it is easy to show
that $\{p_0,p_1, . . . , p_m\}$ is a Sturm  sequence for $p$ on any
interval. If $p$ has some multiple zeroes then $m<n$. Since $p_m$
divides both $p_0$ and $p_1,$ it also divides $p_i$ for all~$i$.
Then setting $\widetilde p_i = p_i/p_m,$ it happens that $\widetilde
p_0$ has the same zeroes of $p$ but all with multiplicity 1, and
 $\{\widetilde p_0, \widetilde p_1, . . . ,\widetilde
p_m\}$ is a Sturm   sequence for $\widetilde p_0$ on any interval,
and so the localizacion problem of the real zeroes of $p$  is
solved.

In the particular case that the polynomial $p$ has rational
coefficients and $a$ and $b$ are also in $\mathbb{Q}$ then all the
conditions of Sturm Theorem can be checked analytically.  Clearly,
this result can be extended to $a=-\infty$ or $b=\infty$.

As an illustration of the method we give all the details for proving
that the function $g$ appearing in the proof of
Theorem~\ref{main2} is strictly positive in $[0,{45}/{100}].$ We
have that for $x\in [0,{45}/{100}]$,
$$0<-x+\frac 54=T_{1,\ell}(x)<T_1(x)<f(x)<T_2(x)<T_{2,u}(x)=\frac{47}{75}x^2-x+\frac{94}{75}.$$
Define the polynomial
\[
p(x)=2+x^2T_{1,\ell}^2(x)-T_{2,u}^2(x)-3x T_{2,u}(x),
\]
which satisfies that  $p(x)<g(x)$ for $x \in [0,{45}/{100}].$ Then
\[
p(x)=p_0(x)={\frac {3416}{5625}} {x}^{4}-{\frac {469}{150}}
{x}^{3}+{\frac { 179249}{90000}} {x}^{2}-{\frac {94}{75}} x+{\frac
{2414}{5625}}.
\]
Its Sturm sequence is
\begin{align*}
p_1(x)&= {\frac {13664}{5625}} {x}^{3}-{\frac {469}{50}}
{x}^{2}+{\frac { 179249}{45000}} x-{\frac {94}{75}}
  ,\\[1mm]
p_2(x)&={\frac {355316101}{175680000}} {x}^{2}-{ \frac
{3202259}{9369600}} x -{\frac {1135387}{43920000}}
   ,\\[1mm]
p_3(x)&=-{\frac { 45065042306901196}{18035647375691743}}x+ {\frac
{24672388276565440}{18035647375691743}}
  ,\\[1mm]
p_4(x)&=  -{\frac {24548932950879333622396114393201747}{
62423915106233442706008445888230000}}
  .
\end{align*}

Define $S(c)=[\operatorname{sgn}(p_0(c)),
\operatorname{sgn}(p_1(c)),\ldots, \operatorname{sgn}(p_4(c))]$,
with $c\in\mathbb{R}\cup\{-\infty,\infty\}$, where by notation
$\operatorname{sgn}(\pm\infty)=\pm.$

It holds that $S(0)=[+,-,-,+,-]$, $S(45/100)=[+,-,+,+,- ]$,
$S(46/100)=[-,-,+,+,- ]$ and $S(+\infty)=[+,+,+,-,-]$. Hence
$V(0)=3$, $V(45/100)=3$, $V(46/100)=2$ and $V(+\infty)=1.$ By Sturm
Theorem we deduce that $p$ has $V(0)-V(45/100)=0$ real roots in
$(0,45/100)$. Moreover $p$ has exactly $V(0)-V(+\infty)=2$ positive
real roots and the smallest one is in $(45/100,46/100).$ Hence,
since $p(0)>0$, it holds that $g(x)>p(x)>0$ on $[0,45/100]$,  as we
wanted to prove.

Notice that the proofs of Steps 1 and 2 in Theorem~\ref{main2}
follow the same ideas but involve much more computations which, for
the sake of simplicity, are  omitted. In that proofs  we used Maple
software to do the computations.

\section*{Acknowledgments}
The first author was   partially
 supported by grants MINECO/FEDER reference MTM2008-03437
 and Generalitat de Catalunya reference 2009-SGR410.  The second author by grants
  MINECO/FEDER reference
   MTM2009-08869 and MINECO reference  MTM2012-33937

\end{document}